\title[Closed $\text{SL}(3,\mathbb{C})$-structures on nilmanifolds]{Closed  $\text{SL}(3,\mathbb{C})$-structures on nilmanifolds}
\author{Anna Fino}
\address[A. Fino]{Dipartimento di Matematica ``G. Peano'' \\ Universit\`a degli Studi di Torino\\ Via Carlo Alberto 10\\10123 Torino\\ Italy}
\email{annamaria.fino@unito.it}
\author{Francesca Salvatore}
\address[F. Salvatore]{Dipartimento di Matematica ``G. Peano'' \\ Universit\`a degli Studi di Torino\\ Via Carlo Alberto 10\\10123 Torino\\ Italy}
\email{francesca.salvatore@unito.it}
\subjclass[2010]{53C10, 17B30, 53C29}  
\keywords{$\text{SL}(3,\mathbb{C})$-structures, mean convex, half-flat $\text{SU}(3)$-structures, symplectic forms}
\theoremstyle{plain}
\newtheorem{theorem}{Theorem}[section]
\newtheorem*{theoremA}{Theorem A}
\newtheorem*{theoremB}{Theorem B}
\newtheorem*{theoremC}{Theorem C}
\newtheorem{lemma}[theorem]{Lemma}
\newtheorem{proposition}[theorem]{Proposition}
\theoremstyle{definition}
\newtheorem{definition}[theorem]{Definition}
\newtheorem{example}[theorem]{Example}
\theoremstyle{remark}
\newtheorem{remark}[theorem]{Remark}
\numberwithin{equation}{section} \setcounter{tocdepth}{1}
\newcommand{\cmark}{\ding{51}}%
\newcommand\widehatt[1]{%
\savestack{\tmpbox}{\stretchto{%
  \scaleto{%
    \scalerel*[\widthof{\ensuremath{#1}}]{\kern-.6pt\bigwedge\kern-.6pt}%
    {\rule[-\textheight/2]{1ex}{\textheight}}
  }{\textheight}%
}{0.5ex}}%
\stackon[1pt]{#1}{\tmpbox}%
}
\begin{document}

\begin{abstract}
In this paper we consider closed $\text{SL}(3,\mathbb{C})$-structures which are either mean convex or tamed by a symplectic form.
These  notions were introduced by Donaldson in relation to $\text{G}_2$-manifolds with boundary.
In particular, we  classify nilmanifolds which carry an invariant mean convex closed $\text{SL}(3,\mathbb{C})$-structure and those which admit an invariant mean convex half-flat $\text{SU}(3)$-structure.
We also prove that, if a solvmanifold admits  an invariant  tamed closed $\text{SL}(3,\mathbb{C})$-structure, then it also  has  an invariant  symplectic half-flat $\text{SU}(3)$-structure. 
\end{abstract}

\maketitle

\section{introduction}

An $\text{SL}(3,\mathbb{C})$-structure  on an oriented   manifold  of real dimension $6$  is  defined by a   definite real  $3$-form $\rho$,  i.e.   by  a  stable $3$-form $\rho$  inducing an almost complex structure $J_{\rho}$ (see \cite{Hitchin,Reichel}). 
We shall say that  the  $\text{SL}(3,\mathbb{C})$-structure $\rho$  is \emph{closed} if $d\rho=0$. 
As remarked in  \cite{Donaldson},   closed $\text{SL}(3,\mathbb{C})$-structures  obey an $h$-principle, since any hypersurface in $\mathbb R^7$  acquires a closed $\text{SL}(3,\mathbb{C})$--structure.

A special case    of closed $\text{SL}(3,\mathbb{C})$-structure is given by a  \emph{closed  $\text{\normalfont SU}(3)$-structure}, i.e.   by the data of an almost Hermitian structure $(J, g, \omega)$ and a $(3,0)$-form $\Psi$  of non-zero constant length   satisfying 
 $$ \quad \frac{i}{2} \Psi \wedge \overline \Psi =  \frac 23  \omega^3,  \quad  d  (\text{Re}(\Psi) )=0.$$ Indeed the $3$-form  $\rho =  \text{Re}(\Psi)$ defines a closed $\text{SL}(3,\mathbb{C})$-structure such that  $J_{\rho} = J$.

As shown in \cite{Donaldson}, a closed $\text{SL}(3,\mathbb{C})$-structure always determines a real $3$-form $ \hat  \rho \coloneqq J_{\rho} \rho$ such that $d \hat \rho $ is of type $(2,2)$ with respect to $J_{\rho}$. Moreover  $\hat{\rho}$ is the imaginary part of a complex $(3,0)$-form $\Psi$. We shall   say that   a closed $\text{SL}(3,\mathbb{C})$-structure  is  \emph{mean convex} if  the $(2,2)$-form $d \hat \rho$ is   semi-positive. Note that $J_{\rho}$  is integrable if and only if  $d (J_{\rho} \rho)  = 0.$  A special class of  mean convex closed  $\text{SL}(3,\mathbb{C})$-structures is given by \emph{nearly-K\"ahler} structures.  Indeed, a  nearly-K\"ahler structure  can be defined as an   $\text{SU}(3)$-structure $( \omega, \Psi)$  satisfying 
 the following conditions:
\[d\omega=-\frac{3}{2}\nu_0 \,  \text{Re}(\Psi), \quad d (\text{Im}(\Psi))=\nu_0 \,  \omega^2,
\] 
where $\nu_0\in \mathbb{R}-\{0\}$ and therefore, up  to a change of sign of $ \text{Re}(\Psi)$, we can suppose  $\nu_0>0$.  The  nearly-K\"ahler condition forces the induced  Riemannian metric $g$  to be  Einstein  and, up to now, very few examples of manifolds admitting complete nearly-K\"ahler structures are known \cite{Butruille, FoscoloHaskins, Gray1, Gray2, Nagy1, Nagy2}. 
More in general,  an $\text{SU}(3)$-structure $(\omega, \Psi)$ such that  $d (\text{Re}(\Psi)) =0$ and  $d (\omega \wedge  \omega)=0$ is called {\em{half-flat}}, see    for instance \cite{BelCorFreGoe, Bryant, ChiossiSalamon, Conti, CorLeiSchSH, FidMinTom, GibLuPopSte, Hitchin2, Larfors}  for general results on this types of structures.  In particular, every oriented hypersurface of a Riemannian $7$-manifold with holonomy in $\text{G}_2$  is naturally endowed with a half-flat $\text{SU}(3)$-structure   and, conversely, using the Hitchin flow equations, a  $6$-manifold with a real analytic half-flat $\text{SU}(3)$-structure  can be realized as a hypersurface of a 7-manifold with holonomy in $\text{G}_2$ \cite{Bryant, Hitchin2}.

Nilmanifolds,   i.e. compact quotients  $\Gamma \backslash G$ of   connected, simply connected, nilpotent Lie groups  $G$  by a  lattice  $\Gamma$,  provide a large class of   compact  $6$-manifolds admitting invariant  closed $\text{SL}(3,\mathbb{C})$-structures  \cite{ChiossiSalamon,ChiossiSwann,  Conti, ContiTomassini, FinoRaffero}, where by  invariant  we mean  induced by  a left-invariant one  on the nilpotent Lie group $G$.  Note that nilmanifolds cannot admit invariant nearly K\"ahler structures, since by \cite{Milnor} the Ricci tensor of a left-invariant metric  on  a non-abelian nilpotent Lie group always has 
a strictly negative direction and a strictly positive direction.

Since a  nilmanifold   is parallelizable, its  Stiefel-Whitney numbers and  Pontryagin numbers are   all zero, hence by  well-known theorems of Thom and Wall,   it   bounds orientably, i.e.  it  is diffeomorphic to the boundary of a compact  connected manifold $N$. So  it would be   a natural question to see if,  given a $6$-dimensional nilmanifold endowed with an invariant mean convex closed $\text{SL}(3,\mathbb{C})$-structure $\rho$,   there exists  on $N$  a closed  $\text{G}_2$-structure with boundary value   an  \lq \lq enhancement" of $\rho$   (see  \cite[Section 3.1]{Donaldson} for more details).

According to \cite{Goze, Magnin} there are 34 isomorphism classes  of  $6$-dimensional real nilpotent Lie algebras  $\mathfrak{g}_i$, $i =1, \ldots, 34,$  listed in Table  \ref{table1}.
In this paper we  classify   $6$-dimensional  nilpotent Lie algebras  admitting mean convex  closed $\text{SL}(3,\mathbb{C})$-structures:
\begin{theoremA} \label{theoremA}
Let $M= \Gamma \backslash G$ be a $6$-dimensional nilmanifold. Then $M$ admits invariant   mean convex closed  $\normalfont \text{SL}(3,\mathbb{C})$-structures if and only if  the Lie algebra  $\mathfrak{g}$ of $G$  is not isomorphic to any of the  six   Lie algebras $\mathfrak{g}_i$, $i=1,2,4,9,12,34$, as listed in Table \ref{table1}.
\end{theoremA}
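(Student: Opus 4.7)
The argument will be purely Lie-algebraic. Since only invariant structures are considered, an invariant closed $\text{SL}(3,\mathbb{C})$-structure on $M = \Gamma \backslash G$ corresponds to a closed definite $3$-form $\rho$ on the Lie algebra $\mathfrak{g}$, and the induced $J_\rho$, $\hat\rho = J_\rho \rho$ and $d\hat\rho$ are then determined algebraically from the structure equations. Thus mean convexity reduces to a test to be performed on each of the $34$ Lie algebras in Table \ref{table1}, and the proof naturally splits into an existence part (for the $28$ good Lie algebras) and an obstruction part (for the $6$ excluded ones).

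For the existence part, for each $\mathfrak{g}_i$ with $i \notin \{1,2,4,9,12,34\}$ I would exhibit an explicit closed $3$-form $\rho$ written in a basis dual to the one appearing in the structure equations of $\mathfrak{g}_i$. I would then check that the stability invariant $\lambda(\rho)$ is strictly negative so that $J_\rho$ is defined by Hitchin's formula, compute $\hat\rho$, and verify by direct inspection that the $(2,2)$-form $d\hat\rho$ is semi-positive. In practice this is most transparent when $\rho$ is chosen so that $d\hat\rho = \omega \wedge \gamma$ for a positive semi-definite $(1,1)$-form $\gamma$ with respect to the induced Hermitian metric, or more simply a non-negative multiple of $\omega^2$; in most cases an ansatz with only one or two free parameters suffices. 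A useful shortcut is the remark from the introduction that $J_\rho$ is integrable if and only if $d\hat\rho = 0$, so whenever $\mathfrak{g}_i$ is known to carry an invariant complex structure trivialising the canonical bundle the mean convexity condition is automatic.

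For the obstruction part, I would first dispatch those among $\mathfrak{g}_1, \mathfrak{g}_2, \mathfrak{g}_4, \mathfrak{g}_9, \mathfrak{g}_{12}, \mathfrak{g}_{34}$ that do not admit any closed $\text{SL}(3,\mathbb{C})$-structure at all, using the classifications of \cite{ContiTomassini, FinoRaffero}. For the remaining excluded Lie algebras I must rule out mean convexity for \emph{every} closed definite $\rho$. The plan is to use the action of $\text{Aut}(\mathfrak{g}_i)$ on the space of definite $3$-forms to normalise $\rho$ to a canonical shape with as few free parameters as possible, then compute $d\hat\rho$ symbolically via Hitchin's formula, and exhibit two $(1,0)$-forms $\alpha$ and $\beta$ for which $d\hat\rho \wedge \alpha \wedge \bar\alpha$ and $d\hat\rho \wedge \beta \wedge \bar\beta$ have opposite signs relative to the orientation, independently of the remaining parameters. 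The sign mismatch forces semi-positivity to fail.

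The principal technical difficulty is this last obstruction step. Hitchin's formula for $J_\rho$ involves a square root of a polynomial in the coefficients of $\rho$, so $d\hat\rho$ is a rational function of those coefficients and a direct symbolic computation is cumbersome; the automorphism-group reduction is essential to keep it manageable. The decisive algebraic fact to extract is that the structure constants of each excluded Lie algebra force one component of $d\hat\rho$ to have a sign incompatible with the positivity of another component, producing an obstruction that no deformation of $\rho$ can remove.
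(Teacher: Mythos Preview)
Your overall framework---reduce to Lie algebras, exhibit explicit examples on the $28$ good algebras, and obstruct on the $6$ bad ones---matches the paper. But two points need correction.

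First, your proposed shortcut is backwards. By Definition~\ref{meanconvex}, mean convexity requires $d\hat\rho$ to be a \emph{non-zero} semi-positive $(2,2)$-form. If $J_\rho$ is integrable then $d\hat\rho = 0$, so an integrable closed $\text{SL}(3,\mathbb{C})$-structure is never mean convex. This is precisely how the paper disposes of $\mathfrak{g}_{34}$: on the abelian algebra every form is closed, hence $d\hat\rho = 0$ identically. None of the other five excluded algebras is obstructed for lack of closed definite $3$-forms either: $\mathfrak{g}_4$, $\mathfrak{g}_9$, $\mathfrak{g}_{12}$ all carry half-flat structures (Table~\ref{table1}), and the paper's own computation of $\lambda(\rho)$ on $\mathfrak{g}_1$, $\mathfrak{g}_2$ shows that closed definite $3$-forms exist there too. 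So the preliminary dispatch you describe removes nothing.

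Second, your obstruction mechanism---finding $(1,0)$-forms $\alpha,\beta$ with $d\hat\rho\wedge i\alpha\wedge\bar\alpha$ and $d\hat\rho\wedge i\beta\wedge\bar\beta$ of opposite sign---would detect a negative \emph{diagonal} entry of the Hermitian matrix $(\beta_{m\overline{n}})$ associated to $d\hat\rho$ via Proposition~\ref{prop3}. The paper's actual obstruction on $\mathfrak{g}_1$, $\mathfrak{g}_2$ is subtler: one finds $\beta_{1\overline{1}}=0$, so the diagonal entries may well all be non-negative, but then the $2\times 2$ minor condition $\beta_{1\overline{1}}\beta_{3\overline{3}}-|\beta_{1\overline{3}}|^2\geq 0$ forces $\beta_{1\overline{3}}=0$, and this in turn forces $\lambda(\rho)=0$, contradicting definiteness. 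Your opposite-sign test would not see this; you need the full positive-semidefiniteness criterion~\ref{criterio}, including the off-diagonal minors. The paper also does not use the $\text{Aut}(\mathfrak{g}_i)$-normalisation you propose: it works with the generic closed $\rho=\sum p_{ijk}e^{ijk}$ directly, observes which coefficients $p_{ijk}$ must be non-zero for $\lambda(\rho)<0$, extracts a $(1,0)$-basis from $\{e^k-iJ_\rho e^k\}$ accordingly, and then runs~\ref{criterio} symbolically in the remaining $p_{ijk}$. The existence part is handled exactly as you suggest, by the explicit list of mean convex closed $\text{SU}(3)$-structures in Table~\ref{table2}.
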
 
Using the classification of half-flat nilpotent Lie algebras (see \cite{Conti}), we can then prove the following:
\begin{theoremB}\label{half-flat mean convex}
A  nilmanifold $M= \Gamma \backslash G$ has  an invariant mean convex  half-flat  structure if and only if  the Lie algebra $\mathfrak{g}$ of $G$  is isomorphic to any of the Lie algebras $\mathfrak{g}_i$, $i=6,7,8,10,13,15,16,22,24,25,28,29,30,31,32,33$, as listed in Table \ref{table1}.
\end{theoremB}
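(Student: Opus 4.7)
The plan is to combine Theorem~A with Conti's classification of half-flat nilpotent Lie algebras in dimension six. For any half-flat $\text{SU}(3)$-structure $(\omega,\Psi)$, the form $\rho=\mathrm{Re}(\Psi)$ defines a closed $\text{SL}(3,\mathbb{C})$-structure with $\hat\rho=J_\rho\rho=\mathrm{Im}(\Psi)$, so the notion of mean convexity as a half-flat structure coincides with the notion of mean convexity as a closed $\text{SL}(3,\mathbb{C})$-structure. Consequently, a nilmanifold admitting an invariant mean convex half-flat structure a fortiori admits an invariant mean convex closed $\text{SL}(3,\mathbb{C})$-structure, and Theorem~A already rules out $\mathfrak{g}_i$ for $i\in\{1,2,4,9,12,34\}$. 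Intersecting the remaining $28$ Lie algebras with Conti's list of half-flat nilpotent Lie algebras produces a finite set of candidates to be analysed individually.

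The ``if'' direction then reduces to exhibiting, for each of the sixteen $\mathfrak{g}_i$ appearing in the statement, an explicit invariant $\text{SU}(3)$-structure that is simultaneously half-flat (so $d\rho=0$ and $d(\omega\wedge\omega)=0$) and mean convex (so $d\hat\rho$ is semi-positive as a $(2,2)$-form with respect to $J_\rho$). A natural approach is to start from a half-flat representative on the standard structure equations of $\mathfrak{g}_i$ used in Conti's work, embed it in a multi-parameter family of half-flat structures, and adjust the parameters until the semi-positivity of $d\hat\rho$ can be verified by computing the eigenvalues of the associated symmetric endomorphism of $\Lambda^{1,1}_{J_\rho}\mathfrak{g}^*$.

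The ``only if'' direction is more delicate: for every half-flat Lie algebra outside the claimed list that has not already been discarded by Theorem~A, one must prove that no invariant half-flat $\text{SU}(3)$-structure on $\mathfrak{g}_i$ is mean convex. I would parametrise the full space of invariant half-flat structures following Conti, compute $d\hat\rho$ symbolically in these parameters, and exhibit a uniform obstruction to semi-positivity. A useful first tool is the coefficient of $\omega\wedge\omega$ in the Lefschetz decomposition of $d\hat\rho$, namely its trace with respect to $\omega$: whenever this coefficient has a fixed sign incompatible with semi-positivity the obstruction is immediate, and otherwise one passes to the primitive part and inspects its eigenvalues.

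The main obstacle I anticipate is this obstruction step. Exhibiting one mean convex half-flat structure on a given $\mathfrak{g}_i$ is comparatively mechanical, but certifying the \emph{non-existence} of such a structure requires handling the entire moduli of invariant half-flat structures on that Lie algebra uniformly. In concrete terms, it amounts to showing that a certain semi-algebraic semi-definiteness condition in the structure parameters has no admissible solution, and I expect this discriminant-type analysis to be the technical heart of the proof of Theorem~B.
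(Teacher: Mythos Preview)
Your overall plan matches the paper's: intersect Conti's half-flat list with Theorem~A, exhibit explicit mean convex half-flat examples on the sixteen algebras, and prove non-existence on the four remaining candidates $\mathfrak{g}_{11},\mathfrak{g}_{14},\mathfrak{g}_{21},\mathfrak{g}_{27}$. Where you diverge is in the non-existence argument. You propose to parametrise all invariant half-flat structures on each $\mathfrak{g}_i$ and analyse semi-positivity of $d\hat\rho$ as a function of these parameters; the paper instead avoids parametrising half-flat structures altogether. It first proves a lemma (using only the mean convex closed $\text{SL}(3,\mathbb{C})$ condition, not half-flatness) constraining the $J_\rho$-invariant filtration $U_j=V_j\cap J_\rho V_j$ attached to the nilpotent filtration $V_j=\langle e^1,\dots,e^j\rangle$: for instance $U_3=U_4$ on $\mathfrak{g}_{11},\mathfrak{g}_{14}$ and $\dim U_2=2$ on $\mathfrak{g}_{21},\mathfrak{g}_{27}$. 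These constraints allow one to build an $\text{SU}(3)$-adapted coframe $(f^i)$ with $df^1=df^2=0$ and $f_6$ central (and additionally $df^3=0$ on $\mathfrak{g}_{27}$). Only then are half-flatness, unimodularity and mean convexity imposed, now as polynomial conditions on the structure constants $c^k_{ij}$ in the adapted frame; the resulting vanishings force $b_1$, $b_2$ or the nilpotency step to be incompatible with the target $\mathfrak{g}_i$. The gain is tractability: your brute-force parametrise-and-discriminate attack on the half-flat moduli involves the nonlinear compatibility $\omega\wedge\rho=0$ and the normalisation, for which Conti's existence proof gives no clean global parametrisation, whereas the Chiossi--Swann adapted-basis reduction converts the obstruction into elementary rank computations on small matrices.
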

Moreover, in Section \ref{Section6} we show that the mean convex condition is preserved by the Hitchin flow equations in some special cases. More generally, since in our examples the property is preserved for small times, it would be interesting to determine if this is always the case.

Given a closed  $\text{SL}(3,\mathbb{C})$-structure  $\rho$ on a $6$-manifold,  another  natural condition to  study  is the existence of a symplectic form $\Omega$ taming $J_{\rho}$, i.e. such that  $\Omega(X, J_{\rho} X)>0$ for each  non-zero vector field $X$. This is equivalent to  the  positivity  in the standard sense  of  the $(1,1)$-component   $\Omega^{1,1}$ of $\Omega$.
We  shall say that a closed $\text{SL}(3,\mathbb{C})$-structure  $\rho$ is \emph{tamed} if there exists a symplectic form $\Omega$ such that $\Omega^{1,1} >0$.

As shown in \cite{Donaldson} a mean convex $\text{SL}(3,\mathbb{C})$-structure on a compact $6$-manifold cannot be tamed by any  symplectic form.
If we remove the assumption of mean convexity,  examples of  tamed closed  $\text{SL}(3,\mathbb{C})$-structures are given by symplectic half-flat struc\-tures  $(\omega, \Psi)$, i.e., by  half-flat $\text{SU}(3)$-structures $(\omega,\Psi)$ with $d\omega=0$. In  this  case  $\rho=\text{Re}(\Psi)$  is tamed by the symplectic form $\omega$, since $\omega$ is of type $(1,1)$ with respect to $J_{\rho}$.
In \cite{ContiTomassini},  nilmanifolds  admitting invariant symplectic half-flat structures  were  classified. Later, this classification was generalized to solvmanifolds, i.e.  to compact quotients  $\Gamma \backslash G$ of   connected, simply connected, solvable Lie groups  $G$  by   lattices  $\Gamma$ (for more details, see \cite{SHFsolvmanifolds}).

We prove the following result:
\begin{theoremC}\label{tamed}
Let  $\Gamma \backslash G$ be a $6$-dimensional  solvmanifold, not a torus.  
Then $\Gamma \backslash G$   admits an invariant tamed  closed $\normalfont \text{SL}(3,\mathbb{C})$-structure  if and only if  the Lie algebra $\mathfrak{g}$  of $G$ has  symplectic half-flat structures.

If $\mathfrak{g}$  is nilpotent, then  it is isomorphic to $\mathfrak{g}_{24}$ or $\mathfrak{g}_{31}$ as listed in Table \ref{table1}. 

If $\mathfrak{g}$ is solvable,  then it is isomorphic to one among  $\mathfrak{g}_{6,38}^0$, $\mathfrak{g}_{6,54}^{0,-1}$, $\mathfrak{g}_{6,118}^{0,-1,-1}$, $\mathfrak{e}(1,1) \oplus \mathfrak{e}(1,1)$, $ A_{5,7}^{-1,\beta,-\beta}\oplus \mathbb{R}$,  $A_{5,17}^{0,0,-1} \oplus \mathbb{R}$, $ A_{5,17}^{\alpha,-\alpha,1} \oplus \mathbb{R}$, as listed in Table \ref{table3}.

Moreover, all the  nine  Lie algebras admit closed $\text{\normalfont SL}(3,\mathbb{C})$-structures tamed by a symplectic form $\Omega$ such that $d\Omega^{1,1}\neq 0$. 

\end{theoremC}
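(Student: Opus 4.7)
The plan is to prove Theorem C in three stages: the easy implication of the equivalence, the hard classification direction, and the construction of tamings with $d\Omega^{1,1}\neq 0$ for each listed Lie algebra.

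One direction of the equivalence is essentially contained in the introduction: if $(\omega,\Psi)$ is a symplectic half-flat $\text{SU}(3)$-structure on the Lie algebra $\mathfrak{g}$, then $\rho=\text{Re}(\Psi)$ is a closed $\text{SL}(3,\mathbb{C})$-structure tamed by $\omega$, since $\omega$ is a positive $(1,1)$-form with respect to $J_\rho$. Hence the nine Lie algebras known to admit invariant symplectic half-flat structures --- in the nilpotent case by \cite{ContiTomassini} and in the non-nilpotent solvable case by \cite{SHFsolvmanifolds} --- all automatically admit tamed closed $\text{SL}(3,\mathbb{C})$-structures.

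For the converse, suppose $\Gamma\backslash G$ admits an invariant tamed closed $\text{SL}(3,\mathbb{C})$-structure $\rho$ with taming symplectic form $\Omega$. Then $\Omega$ yields a left-invariant symplectic form on $G$, and therefore $\mathfrak{g}$ must be a $6$-dimensional symplectic Lie algebra. Invoking the known classifications of $6$-dimensional symplectic nilpotent and symplectic unimodular solvable Lie algebras, one proceeds case by case. For each symplectic Lie algebra \emph{not} in our target list, the obstruction strategy is to fix a basis of invariant $1$-forms adapted to the structure equations, parametrize the space of closed stable $3$-forms $\rho$ modulo the action of the automorphism group of $\mathfrak{g}$, compute $J_\rho$ in terms of the parameters, and then show that the linear system $d\Omega=0$ combined with the pointwise positivity condition $\Omega^{1,1}>0$ has no solution. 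This requires a judicious exploitation of the structure constants to cut the algebraic system down.

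For the final assertion, for each of the nine Lie algebras we exhibit explicitly a pair $(\rho,\Omega)$ with $\rho$ a closed $\text{SL}(3,\mathbb{C})$-structure tamed by $\Omega$ and satisfying $d\Omega^{1,1}\neq 0$. The natural approach is to perturb a known symplectic half-flat structure: starting from $(\omega_0,\Psi_0)$ symplectic half-flat on $\mathfrak{g}$, one seeks $\Omega=\omega_0+\alpha$ with $\alpha$ a closed $2$-form chosen so that $\Omega$ remains symplectic, still tames $J_{\rho_0}$ for $\rho_0=\text{Re}(\Psi_0)$, and has a non-trivial $(2,0)+(0,2)$ component with respect to $J_{\rho_0}$ whose presence forces $\Omega^{1,1}$ to be non-closed --- a direct, if lengthy, calculation in each of the nine cases. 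The chief obstacle is the case-by-case obstruction verification in the second stage: the list of $6$-dimensional symplectic Lie algebras not admitting symplectic half-flat structures is non-trivial, and in each case one must rule out every parameter configuration of closed stable $3$-form together with every candidate taming symplectic form, the reduction of the parameter space via the automorphism group of $\mathfrak{g}$ being the central technical ingredient.
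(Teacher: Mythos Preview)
Your overall architecture matches the paper's: reduce to the Lie algebra, invoke the classifications of $6$-dimensional symplectic nilpotent and symplectic unimodular solvable Lie algebras, and eliminate case by case. The difference lies in the obstruction mechanism. For most nilpotent cases the paper does \emph{not} parametrize all closed definite $\rho$ and all closed $\Omega$ simultaneously; instead it applies a structural lemma from \cite{EnriettiFinoVezzoni}: if $J_\rho\xi(\mathfrak{g})\cap[\mathfrak{g},\mathfrak{g}]\neq\{0\}$ then no symplectic form can tame $J_\rho$. Since $e_6$ is always central, one only computes $J_\rho e_6$ and checks that it lies in the derived subalgebra for every closed definite $\rho$; this disposes of fifteen nilpotent algebras at once, and a variant (non-trivial kernel of $\pi\circ J_\rho\colon\xi(\mathfrak{g})\to\mathfrak{g}/[\mathfrak{g},\mathfrak{g}]$) handles three more. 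For the remaining nilpotent and the non-nilpotent solvable algebras the paper does form $g=\Omega^{1,1}(\cdot,J_\rho\cdot)$ as you propose, but rather than attacking the full positivity system it isolates a single diagonal entry $g_{kk}$ (or a relation $g_{rr}=-g_{kk}$) and shows it vanishes identically for every closed definite $\rho$ and every closed $\Omega$; one scalar identity kills positive-definiteness. Your proposed reduction via $\mathrm{Aut}(\mathfrak{g})$ is not used in the paper and would require determining each automorphism group separately. Both routes are valid; the paper's is computationally lighter because it replaces a two-parameter-family positivity analysis by a single Lie-algebraic obstruction or a single vanishing coefficient. For the last assertion the paper simply tabulates explicit pairs $(\rho,\Omega)$, consistent with your perturbation idea but presented without deriving them that way.
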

 
Explicit examples of closed $\text{SL}(3,\mathbb{C})$-structures tamed by a symplectic form $\Omega$ such that $d\Omega^{1,1}\neq 0$ are provided.
These examples provide new examples of closed $\text{G}_2$-structures on the product $M\times S^1$, where $M=\Gamma \backslash G$  is a $6$-dimensional solvmanifold endowed with an invariant tamed closed $\text{SL}(3,\mathbb{C})$-structure.
It would be interesting to see if there exist compact manifolds   which  have   tamed closed  $\text{SL}(3,\mathbb{C})$-structures   but do not admit any symplectic half-flat struc\-tures.

The paper is organized as follows. 
In Section \ref{section2} we review the general theory of semi-positive $(p,p)$-forms focusing on the case $p=2$. In Section \ref{section3} we study the  intrinsic torsion of closed $\text{SU}(3)$-structures in relation to the mean convex condition.

In Section \ref{section4} we prove Theorem A. Starting from this result, in Section \ref{section5} we prove Theorem B. In Section \ref{Section6} we study the behaviour of mean convex  half-flat $\text{SU}(3)$-structures under the Hitchin flow equations. Finally, in Section \ref{Section7}, we prove Theorem C.

{\it Acknowledgements.} 
The authors are supported by the  Project PRIN 2017 ``Real and complex manifolds:  Topology, Geometry and Holomorphic Dynamics''  and by G.N.S.A.G.A. of I.N.d.A.M. 
The authors would like to thank Simon Chiossi and Alberto Raffero for useful discussions and comments. The authors are also grateful to an anonymous referee for useful comments.

\section{Preliminaries on semi-positive differential forms}\label{section2}
In this section we review the definition and main results regarding semi-positive $(p,p)$-forms on complex vector spaces.  
For more details we refer for instance to \cite{Demailly,HarveyKnapp}.

Let $V$ be a complex vector space of complex dimension $n$, with coordinates $(z_1,\ldots,z_n)$. 
Note that $V$ can be  considered  also as a real vector space of dimension $2n$ endowed with the  complex structure $J$ given by the multiplication by $i$.

Consider the exterior algebra
\[
\Lambda V^*\otimes \mathbb{C} = \bigoplus  \Lambda^{p,q}V^*,
\]
where $\Lambda^{p,q}V^*$ is a shorthand for $\Lambda^pV^*\otimes \Lambda^q \overline{V}^*$.
A canonical orientation for $V$ is given by the $(n,n)$-form
\begin{equation}\label{volume}
\tau(z)\coloneqq \frac{1}{2^n} i dz_1\wedge d\overline{z}_1 \wedge \ldots \wedge i dz_n \wedge d\overline{z}_n= dx_1 \wedge  d y_1  \wedge  d x_n \ldots \wedge dy_n,
\end{equation}
where $z_j = x_j + i y_j$.

We shall say that a $(p, p)$-form $\gamma$ is real if $\gamma=\overline{\gamma}$.
One can introduce a  natural notion  of positivity for  real $(p, p)$-forms. 

\begin{definition}\label{semipositive}
A  real $(p, p)$-form  $\gamma \in \Lambda^{p,p} V^*$ is said to be \emph{semi-positive} (resp. \emph{positive}) if, for all $\alpha_j$ of $\Lambda^{1,0}V^*$, $1\leq j\leq n-p$, 
\[
\gamma \wedge i \alpha_1\wedge \overline{\alpha}_1\wedge \ldots \wedge i \alpha_{n-p}\wedge \overline{\alpha}_{n-p}=\lambda \tau(z),
\]
where $\lambda\geq 0$ (resp. $\lambda>0$ when $\alpha_1,\ldots, \alpha_{n-p}$ are linearly indipendent).
\end{definition}

We shall focus on the case  $n = 3$ and we shall provide equivalent definitions for semi-positive real forms of type $(1,1)$ and $(2,2)$. 
For a more general discussion we refer the reader to \cite[Ch. III]{Demailly}.

\begin{proposition} \label{prop2}
Let $\alpha=\frac{i}{2} \sum_{j,k} a_{j\overline{k}} \, dz_j\wedge d\overline{z}_k$ be a real $(1,1)$-form on $V$.
Then the following are equivalent:
\begin{itemize}
\item[(i)] $\alpha$ is semi-positive (resp. positive);
\item[(ii)] the Hermitian  matrix of coefficients $(a_{j\overline{k}})$ is positive semi-definite  (resp. positive definite);
\item[(iii)]  there exist coordinates $\left(w_1,\ldots w_n\right)$  on $V$  such that 
\[
\alpha=\frac{i}{2}\sum_{k=1}^n \tilde a_{k\overline{k}} \,  d w_k \wedge d\overline{w}_k, 
\]
with  $\tilde a_{k\overline{k}} \geq 0$  (resp.  $\tilde a_{k\overline{k}}>0$), $\forall k = 1, \ldots n$.
\end{itemize}
\end{proposition}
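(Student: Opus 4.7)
The plan is to establish the chain (ii) $\Leftrightarrow$ (iii) $\Rightarrow$ (i) $\Rightarrow$ (ii), thereby closing the loop. The equivalence (ii) $\Leftrightarrow$ (iii) will come essentially for free from the spectral theorem for Hermitian matrices: the coefficient matrix $A=(a_{j\bar k})$ is Hermitian and admits a unitary diagonalisation, whose real eigenvalues are exactly the coefficients $\tilde a_{k\bar k}$ appearing in (iii). The unitary change of coordinates $z\mapsto w$ preserves the canonical orientation $\tau$ (since $|\det U|=1$), and positive semi-definiteness of $A$ is equivalent to non-negativity of its eigenvalues; the strict-positivity version follows by the same argument.

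For (iii) $\Rightarrow$ (i), I would argue by direct expansion. Writing $\alpha$ in the diagonal form of (iii) and wedging with $i\alpha_1\wedge\bar\alpha_1\wedge\cdots\wedge i\alpha_{n-1}\wedge\bar\alpha_{n-1}$ produces a sum over $k$ of terms of the form $\tilde a_{k\bar k}\,(i\,dw_k\wedge d\bar w_k)\wedge\prod_{j=1}^{n-1}(i\alpha_j\wedge\bar\alpha_j)$. Each wedge factor is one of the elementary semi-positive $(1,1)$-forms $i\beta\wedge\bar\beta$, and an elementary computation in any unitary basis adapted to the collection $\{dw_k,\alpha_1,\ldots,\alpha_{n-1}\}$ shows that their top wedge is a non-negative multiple of $\tau$; non-negativity of the $\tilde a_{k\bar k}$ therefore yields semi-positivity of $\alpha$. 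When both the $\alpha_j$ are linearly independent and the $\tilde a_{k\bar k}$ are strictly positive, the same expansion delivers strict positivity.

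The most delicate step will be (i) $\Rightarrow$ (ii). Here the idea is to recover the value of the Hermitian form $v\mapsto\sum_{j,k} a_{j\bar k}v_j\bar v_k$ from the semi-positivity hypothesis by choosing test forms adapted to each $v$. Given any nonzero $v\in V$, I would pick a unitary basis $(e_1,\ldots,e_n)$ with $e_1=v/|v|$, let $(w_1,\ldots,w_n)$ be the dual $(1,0)$-basis, and set $\alpha_j=w_{j+1}$ for $j=1,\ldots,n-1$: the wedge $\alpha\wedge iw_2\wedge\bar w_2\wedge\cdots\wedge iw_n\wedge\bar w_n$ isolates the $dw_1\wedge d\bar w_1$-coefficient of $\alpha$ in the new frame, which equals $|v|^{-2}\sum_{j,k} a_{j\bar k}v_j\bar v_k$ up to an explicit positive scalar factor times $\tau$. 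Semi-positivity then forces this Hermitian form to be non-negative on every nonzero vector, which is (ii). I expect the main obstacle to be purely computational: careful bookkeeping of signs and of the positive scalar factors relating the chosen test wedge to $\tau$. Beyond this, the proposition is a concrete multilinear-algebra incarnation of the classical identification of real $(1,1)$-forms on $V$ with Hermitian forms on $\mathbb{C}^n$.
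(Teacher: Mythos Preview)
Your argument is correct. The paper's own proof is extremely brief: it obtains (i) $\Leftrightarrow$ (ii) by citing \cite[Ch.~III, Corollary 1.7]{Demailly} directly, and (ii) $\Leftrightarrow$ (iii) by diagonalising the Hermitian matrix $(a_{j\bar k})$. Your treatment of (ii) $\Leftrightarrow$ (iii) is identical to the paper's; for (i) $\Leftrightarrow$ (ii) you instead supply a self-contained argument (the test-form trick for (i) $\Rightarrow$ (ii) and the direct expansion for (iii) $\Rightarrow$ (i)), which is precisely the content of the cited result in Demailly. So there is no genuine methodological difference---you have simply unpacked the reference that the paper invokes, which has the advantage of being self-contained and the cost of a little more bookkeeping.
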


\begin{proof} 
\mbox{} \\
(i) $\Longleftrightarrow$ (ii) follows from \cite[Ch. III, Corollary 1.7]{Demailly} and its straightforward generalization for the case of positive $(1,1)$-forms; \\
(ii) $\Longleftrightarrow$ (iii) is achieved by diagonalizing the Hermitian matrix of coefficients $(a_{j\overline{k}})$. \\
\end{proof}

The next result follows from \cite[Ch. III, Corollary 1.9, Proposition 1.11]{Demailly}.

\begin{proposition}\label{product}
If $\alpha_1, \alpha_2$ are semi-positive real $(1,1)$-forms, then  $\alpha_1 \wedge \alpha_2$  is semi-positive. 
\end{proposition}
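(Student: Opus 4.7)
The plan is to reduce the statement to the diagonalization result given in Proposition \ref{prop2}(iii). First I would observe that any semi-positive real $(1,1)$-form $\alpha$ admits a decomposition
$$\alpha = \sum_{\ell=1}^{n} i\, \beta_\ell \wedge \overline{\beta_\ell}, \qquad \beta_\ell \in \Lambda^{1,0} V^*.$$
Indeed, by Proposition \ref{prop2}(iii) there exist coordinates $(w_1, \ldots, w_n)$ on $V$ such that $\alpha = \frac{i}{2} \sum_k \tilde{a}_{k\bar k}\, dw_k \wedge d\overline{w}_k$ with $\tilde{a}_{k\bar k} \geq 0$, so it suffices to set $\beta_k := \sqrt{\tilde{a}_{k \bar k}/2}\, dw_k$.

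Applying this decomposition to $\alpha_1 = \sum_j i\,\beta_j \wedge \overline{\beta_j}$ and $\alpha_2 = \sum_k i\,\gamma_k \wedge \overline{\gamma_k}$, the product expands as
$$\alpha_1 \wedge \alpha_2 = \sum_{j,k} i\, \beta_j \wedge \overline{\beta_j} \wedge i\, \gamma_k \wedge \overline{\gamma_k}.$$
To check the semi-positivity of this $(2,2)$-form according to Definition \ref{semipositive}, I would take an arbitrary $(1,0)$-form $\eta \in \Lambda^{1,0} V^*$ (recalling that in our setting $n-p = 3-2 = 1$) and compute
$$\alpha_1 \wedge \alpha_2 \wedge i\eta \wedge \overline{\eta} = \sum_{j,k} i\,\beta_j \wedge \overline{\beta_j} \wedge i\,\gamma_k \wedge \overline{\gamma_k} \wedge i\,\eta \wedge \overline{\eta}.$$
Each summand is a wedge of three elementary positive $(1,1)$-forms of the type $i\,\delta \wedge \overline{\delta}$, and a direct computation shows that such a product equals $|\det M|^2\, \tau(z)$, where $M$ is the matrix expressing the triple $(\delta_1, \delta_2, \delta_3)$ in some basis of $\Lambda^{1,0} V^*$; this is $0$ if the $\delta_r$ are linearly dependent and strictly positive otherwise. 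Hence every summand is a non-negative multiple of the volume form $\tau(z)$ defined in \eqref{volume}, and so is their sum, proving that $\alpha_1 \wedge \alpha_2$ is semi-positive.

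The only genuinely non-trivial ingredient in this argument is the passage from the spectral statement in Proposition \ref{prop2}(iii) to the decomposition $\alpha = \sum_\ell i\,\beta_\ell \wedge \overline{\beta_\ell}$; once this is available, the conclusion is purely combinatorial and the argument extends inductively to wedge products of arbitrarily many semi-positive $(1,1)$-forms, in agreement with \cite[Ch.~III, Prop.~1.11]{Demailly}.
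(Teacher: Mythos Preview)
Your argument is correct. The paper itself does not give a proof of this proposition; it merely cites \cite[Ch.~III, Corollary~1.9, Proposition~1.11]{Demailly}. What you have written is essentially a self-contained version of Demailly's argument specialized to the case $n=3$, $p=2$: the decomposition $\alpha=\sum_\ell i\,\beta_\ell\wedge\overline{\beta_\ell}$ for a semi-positive $(1,1)$-form is precisely the $(1,1)$ case of Demailly's Corollary~1.9 (and, as you observe, follows immediately from the diagonalization in Proposition~\ref{prop2}(iii)), while the fact that a product of elementary forms $i\,\delta\wedge\overline{\delta}$ yields a non-negative multiple of $\tau(z)$ is the core computation underlying Proposition~1.11 there.

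One cosmetic remark: the product $i\,\delta_1\wedge\overline{\delta_1}\wedge i\,\delta_2\wedge\overline{\delta_2}\wedge i\,\delta_3\wedge\overline{\delta_3}$ actually equals $2^3\,|\det M|^2\,\tau(z)$ rather than $|\det M|^2\,\tau(z)$, owing to the factor $\tfrac{1}{2^n}$ in the definition \eqref{volume} of $\tau(z)$. This does not affect the validity of the argument, since only non-negativity is used.
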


Now, for $n =3$, we want to characterize the semi-positivity of real $(2,2)$-forms.
Let $\gamma$ be a real $(2,2)$-form on $V$. We can write 
\begin{equation}\label{gamma}
\gamma=-\frac{1}{4} \sum_{\substack{i<k \\j<l}} \gamma_{i\overline{j}k\overline{l}} dz_i\wedge d\overline{z}_j \wedge dz_k\wedge d\overline{z}_l,
\end{equation}
with respect to some coordinates $(z_1,z_2,z_3)$ on  $V$.

To $\gamma$ we can associate the real $(1,1)$-form $\beta$, given by

\[
\beta= \frac{i}{2}\sum_{m,n}\beta_{m\overline{n}}dz_m\wedge d\overline{z}_n, 
\]
where 
\begin{equation}\label{betacoef}
\beta_{m\overline{n}}\coloneqq \frac{1}{4}\sum_{i,j,k,l} \gamma_{i\overline{j}k\overline{l}} \epsilon_{ikm}\epsilon_{jln}.
\end{equation}
Here  $\epsilon_{abc}$ is the Levi-Civita symbol, with $\epsilon_{123}=1$.
Using a change of basis $dz_i=\sum_p A^p_i dw_p$, the matrix $(\beta_{m\overline{n}})$  changes by congruence via the matrix $\tilde{A}= \text{det}(A) (A^t)^{-1}$, where $A=(A^p_i)$.
Consequently, the semi-positivity of $\beta$ does not depend on the choice of coordinates on V.

Notice that the matrix $(\beta_{m\overline{n}})$ is Hermitian, since $\gamma=\overline{\gamma}$ implies $\gamma_{i\overline{j}k\overline{l}}=\overline{\gamma_{j\overline{i}l\overline{k}}}$.

\begin{proposition}\label{prop3}
Let $\gamma\neq 0$ be a real $(2,2)$-form on $V$. 
Then the following are equivalent:
\begin{itemize}
\item[(i)] $\gamma$ is semi-positive,
\item[(ii)] $\gamma\wedge \alpha >0$ for every positive real $(1,1)$-form $\alpha$, i.e. $\gamma\wedge \alpha=\lambda \tau(z)$ where $\lambda>0$,
\item[(iii)] the associated $(1,1)$-form $\beta$ is semi-positive.
\end{itemize}
\end{proposition}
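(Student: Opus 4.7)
The plan is to reduce all three conditions to statements about the Hermitian matrix $B:=(\beta_{m\bar n})$ associated with $\gamma$ via \eqref{betacoef}. The key bridge is the identity
\[
\gamma\wedge\alpha\;=\;\Bigl(\sum_{m,n}\beta_{m\bar n}\,a_{m\bar n}\Bigr)\,\tau(z)\;=\;\mathrm{tr}\bigl(B\,\overline A\bigr)\,\tau(z),
\]
valid for every real $(1,1)$-form $\alpha=\tfrac{i}{2}\sum a_{m\bar n}\,dz_m\wedge d\bar z_n$ with Hermitian matrix $A:=(a_{m\bar n})$.

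To prove this identity I would expand $\gamma\wedge\alpha$ using \eqref{gamma}: in complex dimension three the only nonzero sextuple wedges $dz_i\wedge d\bar z_j\wedge dz_k\wedge d\bar z_l\wedge dz_m\wedge d\bar z_n$ are those with $\{i,k,m\}=\{j,l,n\}=\{1,2,3\}$, and a short sign count shows each such wedge equals $\epsilon_{ikm}\epsilon_{jln}\cdot 8i\,\tau(z)$. The prefactors $-\tfrac{1}{4}$ and $\tfrac{i}{2}$ then combine with $8i$ to give $+1$, and after extending $\gamma_{i\bar j k\bar l}$ antisymmetrically in $(i,k)$ and $(j,l)$ the sum collapses through \eqref{betacoef} to the claimed expression; the second equality uses the Hermiticity of $A$. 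An immediate consequence is that $\gamma\mapsto\beta$ is a real linear isomorphism between the real $9$-dimensional spaces of $(2,2)$- and $(1,1)$-forms: if $\beta=0$ then $\gamma\wedge\alpha=0$ for every real $(1,1)$-form $\alpha$, whence $\gamma=0$ by the non-degeneracy of the wedge pairing $\Lambda^{2,2}V^*\times\Lambda^{1,1}V^*\to\Lambda^{3,3}V^*$. In particular, the standing hypothesis $\gamma\neq 0$ is equivalent to $B\neq 0$.

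With the bridge in place, the three equivalences reduce to classical Hermitian matrix facts combined with Proposition \ref{prop2}. For (i) $\Leftrightarrow$ (iii), Definition \ref{semipositive} with $n=3$, $p=2$ requires $\gamma\wedge i\alpha_1\wedge\overline{\alpha_1}\geq 0$ for every $\alpha_1\in\Lambda^{1,0}V^*$; the matrices corresponding to such forms are precisely the rank-one positive semi-definite Hermitian matrices, and since every positive semi-definite Hermitian matrix is a sum of rank-one ones, the condition becomes $\mathrm{tr}(B\,\overline A)\geq 0$ for every $A\geq 0$ (using that $\overline A\geq 0$ iff $A\geq 0$), i.e.\ $B\geq 0$, which by Proposition \ref{prop2} is the semi-positivity of $\beta$. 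For (ii) $\Leftrightarrow$ (iii), the factorisation $\mathrm{tr}(B\,\overline A)=\mathrm{tr}\bigl(\overline A^{1/2}B\,\overline A^{1/2}\bigr)$ for positive definite $A$ shows that $\mathrm{tr}(B\,\overline A)>0$ for every such $A$ if and only if $B\geq 0$ and $B\neq 0$, and by the above $B\neq 0$ is automatic from $\gamma\neq 0$.

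The main obstacle is purely combinatorial: one must carefully combine the factors $-\tfrac{1}{4}$, $\tfrac{i}{2}$, $8i$, and the permutation sign $\epsilon_{ikm}\epsilon_{jln}$ so that the displayed identity reads off cleanly. Once that identity is established, the three-way equivalence follows from a few lines of Hermitian linear algebra.
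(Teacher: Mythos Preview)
Your proof is correct and follows essentially the same strategy as the paper's: both reduce all three conditions to positivity properties of the Hermitian matrix $B=(\beta_{m\bar n})$. The paper establishes (i)$\Leftrightarrow$(iii) by the same rank-one computation you describe, then proves (i)$\Rightarrow$(ii) by first diagonalising $\alpha$ (so that $\gamma\wedge\alpha=\sum_r a_{r\bar r}\beta_{r\bar r}\,\tau(z)$, a special case of your trace identity) and (ii)$\Rightarrow$(i) by a limiting family $\alpha_\epsilon=\tfrac{i}{2}\bigl(\alpha_1\wedge\bar\alpha_1+\epsilon(\alpha_2\wedge\bar\alpha_2+\alpha_3\wedge\bar\alpha_3)\bigr)$ with $\epsilon\to 0$. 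Your version instead packages everything through the single identity $\gamma\wedge\alpha=\mathrm{tr}(B\bar A)\,\tau(z)$ and the factorisation $\mathrm{tr}(B\bar A)=\mathrm{tr}(\bar A^{1/2}B\,\bar A^{1/2})$ for $A>0$; this is a slightly more systematic presentation and makes the role of the hypothesis $\gamma\neq 0$ explicit via the isomorphism $\gamma\mapsto\beta$, but the underlying mechanism is the same as the paper's.
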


\begin{proof}
\mbox{} \\
(i) $\Longleftrightarrow$ (iii) 
Let $\gamma$ be a real $(2,2)$ form on $V$. Then $\gamma$ can be written as in \ref{gamma}
with respect to a basis $(dz_1,dz_2,dz_3)$ of $\Lambda^{1,0}V^*$.
By Definition \ref{semipositive}, $\gamma$ is semi-positive if for all $\eta\in \Lambda^{1,0}V^*$ one has $\dfrac{i}{2}\gamma \wedge \eta \wedge \overline{\eta}\geq 0$. Set $\eta=\sum_m \eta_m dz_m$,
then 
\[
\dfrac{i}{2}\gamma \wedge \eta\wedge \overline{\eta} = \sum_{m,n}\beta_{m\overline{n}}\eta_m\overline{\eta}_n \tau(z),
\]
where the coefficients $\beta_{m\overline{n}}$ are defined in \ref{betacoef}.
Therefore, since $\eta$ is arbitrary, $\gamma$ is semi-positive if and only if the matrix $(\beta_{m\overline{n}})$ is positive semi-definite.\\
(i) $\implies$ (ii) Let $\alpha$ be a positive $(1,1)$-form on $V$, then there exists a basis
$\left(dz_1,dz_2,dz_3\right)$ of $\Lambda^{1,0}V^*$ such that 
\[
\alpha=\dfrac{i}{2}\sum_{k}a_{k\overline{k}}dz_k\wedge d\overline{z}_k
\] with $a_{k\overline{k}}>0$.
Let $\gamma$ be a semi-positive $(2,2)$-form on $V$.
We can write
\[
\gamma=-\frac{1}{4} \sum_{\substack{i<k \\j<l}} \gamma_{i\overline{j}k\overline{l}} dz_i\wedge d\overline{z}_j \wedge dz_k\wedge d\overline{z}_l.
\]
Then 
\[
\gamma\wedge \alpha=\sum_{r}a_{r\overline{r}}\beta_{r\overline{r}}\tau(z).
\]
Since $\gamma$ is semi-positive, by (iii) we have that $\beta_{r\overline{r}}\geq 0$ with at least one strictly positive. Therefore, since $a_{r\overline{r}}>0$, for each $r$, the claim follows.
 \\
(ii) $\implies$ (i) Let $\left(\alpha_1,\alpha_2,\alpha_3\right)$ be a basis of $\Lambda^{1,0}V^*$.
We define 
\[
\alpha_\epsilon \coloneqq \dfrac{i}{2}(\alpha_1\wedge\overline{\alpha}_1+\epsilon(\alpha_2\wedge\overline{\alpha}_2+\alpha_3\wedge\overline{\alpha}_3)).
\]
We notice that, for any $\epsilon>0$, $\alpha_\epsilon$ is a positive $(1,1)$-form.
Then, by hypothesis, $\gamma\wedge \alpha_\epsilon>0$. The claim follows by continuity since $\dfrac{i}{2}\gamma\wedge\alpha_1\wedge \overline{\alpha}_1=\lim_{\epsilon\to 0}(\gamma\wedge \alpha_\epsilon)
\geq 0$. 
\end{proof}

As shown in \cite[Theorem 1.2]{HarveyKnapp}, a real $(2,2)$-form $\gamma$  is always diagonalizable, i.e. there exist coordinates $(w_1,w_2,w_3)$ of $V$ such that 
\[
\gamma=-\frac{1}{4}   \sum_{\substack{i<k}} \gamma_{i\overline{i}k\overline{k}} dw_i\wedge d\overline{w}_i\wedge dw_k\wedge d\overline{w}_k.\]
By Proposition \ref{prop3},  $\gamma$ is semi-positive if and only if $ \gamma_{i\overline{i}k\overline{k}} \geq 0$, for every $i<k$.
In particular, the  diagonal  matrix $(\beta_{m\overline{n}})$ associated to $\gamma$ in these coordinates is positive semi-definite.   Moreover, $\gamma$ is positive if and only if 
$ \gamma_{i\overline{i}k\overline{k}}  > 0$, for every $i<k$.

\begin{remark}\cite[formula (4.8)]{Michelsohn}\label{Michelsohn}
A real $(2,2)$-form $\gamma$ on $V$ is positive if and only if $\gamma=\alpha^{2},$
where $\alpha$ is a positive $(1,1)$-form.
\end{remark}

\section{Mean convexity and intrinsic torsion of $\text{SU}(3)$-structures} \label{section3}

In this section we study the mean convex property in the context of  closed $\text{SU}(3)$-structures  and provide necessary and sufficient conditions in terms of the intrinsic torsion of the $\text{SU}(3)$-structure.

An $\text{SL}(3,\mathbb{C})$-structure on a $6$-manifold $M$ is a reduction to $\text{SL}(3,\mathbb{C})$ of
the  frame bundle  of $M$  which is given by a definite real $3$-form $\rho$, i.e. by  a stable $3$-form inducing an almost complex structure $J_{\rho}$.
We recall that   a  3-form $\rho$ on  a real $6$-dimensional space $V$  is stable if its orbit under the action of $\text{GL}(V )$ is open. 
If we fix a volume form $\nu\in\Lambda^6V^*$ and denote by \[A:\Lambda^5 V^* \to V\otimes\Lambda^6 V^* \] the canonical isomorphism induced by the wedge product 
$
\wedge:V^*\otimes \Lambda^5V^* \to \Lambda^6 V^*,
$
we can consider the map 
$$
K_{\rho} : V  \to V\otimes\Lambda^6 V^*,  \quad v  \mapsto  A((\iota_v \rho)\wedge \rho).
$$
A $3$-form $\rho$ on $V$ is stable if and only if   $\lambda(\rho) =\frac{1}{6}\, \text{Tr}(K^2_{\rho}) \neq 0$ (see \cite{Hitchin,Reichel} for further details).
When $\lambda(\rho)<0$, the $3$-form $\rho$ induces an almost complex structure 
\[
J_{\rho} := -\frac{1}{\sqrt{-\lambda(\rho)}}K_{\rho}
\]
and we shall say that $\rho$ is \emph{definite}.
A simple computation shows that $J_{\rho}$ does not change if $\rho$ is rescaled by a non-zero real constant, i.e., $J_{\rho}=J_{s \rho}$ for every $s\in \mathbb{R}-\{0\}$.
Moreover,  defining $\hat \rho \coloneqq J_{\rho}\rho$, we have that $ \rho + i \hat \rho$ is a complex $(3,0)$-form with respect to $J_{\rho}$.

We shall say that an  $\text{SL}(3,\mathbb{C})$-structure  $\rho$  is closed if $d \rho =0$. According to \cite{Donaldson},  $d\hat \rho$ is a  real $(2,2)$-form  and so we can introduce the following

\begin{definition}\label{meanconvex}
Let $\rho$ be a closed $\text{SL}(3,\mathbb{C})$-structure on $M$.
We shall say that $\rho$ is \emph{mean convex} (resp. strictly mean convex) if $d\hat \rho$, pointwise, is a non-zero semi-positive (resp. positive) $(2,2)$-form.
\end{definition}

Given an  $\text{SL}(3,\mathbb{C})$-structure  $\rho$ and
a  non-degenerate positive $(1,1)$-form  $\omega$  on on a $6$-manifold  $M$ such  that  $\rho \wedge \hat \rho=\frac{2}{3}\omega^3$, then the pair $(\omega, \Psi)$, where $\Psi=\rho+iJ_{\rho}\hat{\rho}$, defines  an $\text{SU}(3)$-structure and the associated almost  $J_{\rho}$-Hermitian metric $g$ is given by  $g (\cdot , \cdot) \coloneqq \omega(\cdot,J_{\rho}\cdot)$.
Since $\Psi$ is completely determined by its real part $\rho$, we shall denote   an $\text{SU}(3)$-structure  simply by  the pair $(\omega,\rho)$.

In this case, at any point $p\in M$, one can always find a coframe $\left(f^1,\ldots,f^6\right)$, called \emph{adapted basis} for the $\text{SU}(3)$-structure $(\omega, \rho)$, such that
\begin{equation}\label{frameadattato}
\omega=f^{12}+f^{34}+f^{56}, \quad \rho=f^{135}-f^{146}-f^{236}-f^{245}.
\end{equation}
Here $f^{ij\cdots k}$ stands for the wedge product $f^i\wedge f^j\wedge \cdots \wedge f^k$.

We shall say that the $\text{SU}(3)$-structure $(\omega, \rho)$ is closed if $d \rho=0$ and  in a similar way we can introduce   the following

\begin{definition}
A  closed $\text{SU}(3)$-structure  $(\omega,\rho)$ on a $6$-manifold  $M$  is (strictly) mean convex if the $\text{SL}(3,\mathbb{C})$-structure $\rho$ is (strictly) mean convex.
\end{definition}

The intrinsic torsion of  the  $\text{SU}(3)$-structure $(\omega, \rho)$   can be identified with the pair $(\nabla \omega, \nabla \Psi)$, where $\nabla$ is the Levi-Civita connection of $g$,  and it is a section of the vector bundle 
$T^*M\otimes \mathfrak{su}(3)^{\perp}$,  where $\mathfrak{su}(3)^{\perp} \subset \mathfrak{so}(6)$ is the orthogonal complement of $\mathfrak{su}(3)$ with respect to the Killing Cartan form $\mathcal{B}$ of $\mathfrak{so}(6)$.
Moreover, by  \cite[Theorem 1.1]{ChiossiSalamon}  the intrinsic torsion of $(\omega, \rho)$   is completely determined by  $d\omega$, $d \rho$ and  $d \hat \rho$.
Indeed,  there exist unique differential forms $\nu_0, \pi_0 \in C^{\infty}(M)$, $\nu_1,\pi_1 \in \Lambda^1(M)$, $\nu_2,\pi_2\in 
[\Lambda^{1,1}_0 M ], \nu_3 \in \llbracket \Lambda^{2,1}_0 M \rrbracket $ such that
\begin{equation}\label{torsionforms}
\begin{aligned}
d\omega &=-\frac{3}{2} \nu_0 \, \rho + \frac{3}{2}\pi_0  \, \hat \rho + \nu_1 \wedge \omega + \nu_3, \\
d\rho &=\pi_0 \, \omega^2 + \pi_1 \wedge \rho - \pi_2\wedge \omega, \\
d\hat \rho  &=\nu_0\,\omega^2-\nu_2\wedge \omega +J\pi_1\wedge \rho,
\end{aligned}
\end{equation}
where $[\Lambda^{1,1}_0 M ]\coloneqq \{ \alpha \in [\Lambda^{1,1} M ] ~ | ~ \alpha\wedge \omega^2=0 \}$ is the space of primitive real $(1,1)$-forms and $\llbracket \Lambda^{2,1}_0 M \rrbracket \coloneqq \{ \eta \in \llbracket \Lambda^{2,1} M \rrbracket ~ | ~ \eta \wedge \omega=0 \}$ is the space of primitive real $(2,1)+(1,2)$-forms.
The forms $\nu_i,  \pi_j$ are called \emph{torsion forms} of the $\text{SU}(3)$-structure and they completely determine its intrinsic torsion, which vanishes if and only if all the torsion forms vanish identically.

If  $\rho$ is closed,   as a consequence of  \ref{torsionforms}, we have  $d\hat \rho  = \theta \wedge \omega,$ where $\theta$  is 
the $(1,1)$-form defined by
$\theta \coloneqq\nu_0\,\omega -\nu_2$. 

We recall that, given a  real $(1,1)$-form $\alpha$,  the   trace   $\normalfont \operatorname{Tr} (\alpha)$  of   $\alpha$   is given by 
$3\alpha\wedge\omega^{2}=\normalfont \operatorname{Tr} (\alpha)\omega^3.$
Then, in terms of $\nu_0$ and  the $(1,1)$-form $\theta$, we can prove the following 

\begin{proposition}\label{prop5}
Let $(\omega,\rho)$  be a closed  $\text{\normalfont SU}(3)$-structure  on $M$.
Then
\begin{itemize}
\item[(i)] if $(\omega,\rho)$ is mean convex, then  the torsion form $\nu_0$ is strictly positive and the $(1,1)$-form $\theta$ is not  negative (semi-)definite. Moreover,  its trace $\normalfont \operatorname{Tr}(\theta)$   is strictly positive,
\item[(ii)] if $\theta$ is semi-positive, then the $\text{\normalfont SU}(3)$-structure is mean convex.
\end{itemize}
\end{proposition}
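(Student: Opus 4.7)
The plan is to reduce both statements to Propositions \ref{prop2} and \ref{prop3} by expressing $d\hat\rho$ explicitly in terms of $\theta$ and $\omega$, and then compare positivity conditions via the trace.

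First I would specialize the intrinsic torsion equations \ref{torsionforms} to the closed case: $d\rho=0$ forces $\pi_0=0,\ \pi_1=0,\ \pi_2=0$ by uniqueness of the decomposition of $d\rho$ into $\operatorname{SU}(3)$-irreducible pieces, so
\[
d\hat\rho \;=\; \nu_0\,\omega^2 - \nu_2\wedge\omega \;=\; \theta\wedge\omega,\qquad \theta=\nu_0\,\omega-\nu_2.
\]
Since $\nu_2\in[\Lambda^{1,1}_0 M]$ is primitive, $\nu_2\wedge\omega^2=0$, and therefore $\theta\wedge\omega^2=\nu_0\,\omega^3$. Recalling that $3\theta\wedge\omega^2=\operatorname{Tr}(\theta)\,\omega^3$, I obtain the key identity $\operatorname{Tr}(\theta)=3\nu_0$.

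For (i), the mean convex hypothesis says that the real $(2,2)$-form $d\hat\rho$ is non-zero and semi-positive at every point. I would apply Proposition \ref{prop3}(ii) with the positive $(1,1)$-form $\omega$ itself, which yields $d\hat\rho\wedge\omega>0$ pointwise. But $d\hat\rho\wedge\omega=\theta\wedge\omega^2=\nu_0\,\omega^3$, and $\omega^3$ is a positive volume form, so $\nu_0>0$ everywhere. Consequently $\operatorname{Tr}(\theta)=3\nu_0>0$. A negative semi-definite Hermitian matrix has non-positive trace, so by the equivalence Proposition \ref{prop2}(i)$\Leftrightarrow$(ii), $\theta$ cannot be negative semi-definite (in particular not negative definite).

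For (ii), I would invoke Proposition \ref{product}: the wedge of two semi-positive real $(1,1)$-forms is a semi-positive $(2,2)$-form. Since $\omega$ is positive and $\theta$ is assumed semi-positive, $d\hat\rho=\theta\wedge\omega$ is semi-positive. To complete mean convexity it remains to check $d\hat\rho\not\equiv 0$: if $\theta\ne 0$ is semi-positive its trace is strictly positive, hence $\nu_0>0$ and $d\hat\rho\wedge\omega=\nu_0\,\omega^3>0$, so $d\hat\rho\neq 0$ pointwise. I don't anticipate a real obstacle here; the only delicate step is keeping track that the primitive decomposition of $d\hat\rho$ uses $\pi_1=0$ (forced by $d\rho=0$) so that the term $J\pi_1\wedge\rho$ in the third equation of \ref{torsionforms} truly disappears and $d\hat\rho=\theta\wedge\omega$ is genuinely of the product form to which Proposition \ref{product} applies.
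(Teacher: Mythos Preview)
Your proposal is correct and follows essentially the same route as the paper: both arguments use $d\hat\rho=\theta\wedge\omega$, wedge with the positive form $\omega$ and invoke Proposition~\ref{prop3}(ii) to obtain $\nu_0>0$ and $\operatorname{Tr}(\theta)=3\nu_0>0$ for (i), and invoke Proposition~\ref{product} for (ii). You supply a little more detail than the paper does---explicitly deducing $\pi_1=0$ so that the $J\pi_1\wedge\rho$ term drops, spelling out via the trace why $\theta$ cannot be negative semi-definite, and checking the non-vanishing of $d\hat\rho$ in (ii)---all of which is fine and consistent with the paper's argument.
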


\begin{proof}
Let us assume that $(\omega,\rho)$ is a mean convex closed $\text{SU}(3)$-structure on $M$.
By \ref{torsionforms} we have  $d\hat \rho=\theta \wedge \omega$. 
Now, Proposition \ref{prop3} implies  $d\hat \rho \wedge \alpha>0 $ for every positive real $(1,1)$-form $\alpha$.
Then (i) follows by choosing $\alpha=\omega$; indeed $d\hat \rho \wedge \omega=\nu_0 \omega^3$,  since $\nu_2\in [\Lambda^{1,1}_0 M]$. In particular $\text{Tr}(\theta)=3 \nu_0>0$.
(ii) follows from Proposition \ref{product}.
\end{proof}

A closed $\text{SU}(3)$-structure $(\omega,\rho)$ is  called \emph{half-flat} if $d \omega^2 =0$ and we shall refer to it simply  as a  half-flat structure.  Half-flat structures are strictly related to torsion free $\text{G}_2$-structures.  
We recall that  a $\text{G}_2$-structure on a $7$-manifold $N$ is   characterized by the existence of a $3$-form $\varphi$ inducing a Riemannian metric $g_{\varphi}$ and a volume form $dV_{\varphi}$ given by 
\[
g_{\varphi}(X,Y)dV_{\varphi}=\frac{1}{6}\iota_X \varphi \wedge \iota_Y \varphi \wedge \varphi, \quad X, Y \in \Gamma (TM).
\]
 By \cite{FG},  the  $\text{G}_2$-structure $\varphi$  is \emph{torsion free}, i.e. $\varphi$ is parallel with respect to the Levi-Civita connection of $g_{\varphi}$, if and only if $\varphi$  is closed and co-closed,  or equivalently if the holonomy group $\text{Hol}(g_\varphi)$ is contained in $ \text{G}_2$.
A torsion free $\text{G}_2$-structure $\varphi$  on $N$ induces on each oriented  hypersurface $\iota :M\hookrightarrow N$   a natural half-flat structure $(\omega,\rho)$ given by
\[
\rho=\iota^*\varphi, \quad \omega^2=2  \, \iota^*(*_{\varphi} \varphi).
\]
Conversely, in \cite{Hitchin2}, the   so-called Hitchin flow equations
\begin{equation}\label{HitchinFlow}
\begin{cases}
\frac{\partial}{\partial t} \rho(t)=d\omega(t), \\
\frac{\partial}{\partial t} \omega(t)\wedge \omega(t)=-d\hat{\rho}(t),
\end{cases}
\end{equation}
have been introduced, proving that every compact real analytic half-flat manifold $(M, \omega,\rho)$ can be embedded isometrically as a hypersurface in a $7$-manifold $N$ with  a torsion free $\text{G}_2$-structure. 
Moreover, the intrinsic torsion of the half-flat structure can be identified with the second fundamental form $B\in \Gamma( S^2 T^*M)$ of $M$  with respect to a fixed unit normal vector field $\xi$.
As in \cite{Donaldson}, with respect to  $J_{\rho}$, we can  write  $B=B_{1,1}+B_C$, where $B_{1,1}$ is the real part of a Hermitian form and $B_C$ is the real part of a complex quadratic form. If we denote by $\beta_{1,1}=B_{1,1}(J_{\rho}\cdot,\cdot)$ the corresponding $(1,1)$-form on $M$, we have $\beta_{1,1}\wedge\omega=\frac{1}{2}d\hat{\rho}$, from which it follows that, if $(\omega,\rho)$ is mean convex, then the mean
curvature $\mu$ given explicitly by $\frac{1}{4}\mu \rho\wedge \hat{\rho} =\frac{1}{2} d\hat{\rho}\wedge\omega$ is positive with respect to the normal direction (for more details see  \cite[Prop.  1]{Donaldson}).
Moreover, since the wedge product with $\omega$ defines an injective map on $2$-forms, comparing this with \ref{torsionforms} yields $\theta=2\beta_{1,1}$. Then, by Proposition \ref{prop5}, if $B_{1,1}$ defines a positive semi-definite Hermitian product, then the half-flat structure $(\omega,\rho)$ is mean convex. 

\smallskip

Special types of half-flat structures $(\omega,\rho)$ are called \emph{coupled}, when $d\omega=-\frac{3}{2}\nu_0 \,\rho$, and 
 \emph{double}, when 
$
d\hat \rho=\nu_0 \,\omega^2.
$

Notice that, by Proposition \ref{prop5}, double structures $(\omega, \rho)$  are trivially mean convex as long as $\nu_0>0$. However, it is straightforward to check that, if $(\omega,\rho)$ is a double structure  such that  $
\nu_0<0$, then $(\omega, -\rho )$ is  mean convex.

In \cite[Theorem 4.11]{ChiossiSwann},   a classification of  $6$-dimensional  nilpotent Lie algebras  endowed with a double structure was given. Other examples of double structures on $S^3\times S^3$ were found in  \cite{MadsenSalamon,schulte}. 

For a general Lie algebra we can show the following

\begin{proposition} 
If a Lie algebra  $\mathfrak{g}$ has a  closed strictly mean convex $\normalfont \text{SL}(3,\mathbb{C})$-structure, then $\mathfrak{g}$  admits a  double  structure.
\end{proposition}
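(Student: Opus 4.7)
The plan is to keep the given $3$-form $\rho$ and produce a compatible fundamental $2$-form $\omega$ by taking a suitably rescaled \emph{square root} of the $(2,2)$-form $d\hat{\rho}$. The decisive input is Remark \ref{Michelsohn}: since $\rho$ is strictly mean convex, the real $(2,2)$-form $d\hat{\rho}\in\Lambda^4\mathfrak{g}^*$ is positive with respect to $J_{\rho}$, hence there exists a positive real $(1,1)$-form $\alpha\in\Lambda^2\mathfrak{g}^*$ such that $d\hat{\rho}=\alpha^2$.

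I would then set $\omega\coloneqq c\,\alpha$, choosing $c>0$ so that the $\text{SU}(3)$ volume normalization $\rho\wedge\hat{\rho}=\tfrac{2}{3}\omega^3$ is satisfied; such a $c$ exists and is unique because $\rho\wedge\hat{\rho}$ and $\alpha^3$ are both strictly positive multiples of the canonical volume determined by $J_{\rho}$. The pair $(\omega,\rho)$ then defines an $\text{SU}(3)$-structure on $\mathfrak{g}$ with induced almost complex structure $J_{\rho}$: non-degeneracy of $\omega$ follows from its positivity, the identity $\omega\wedge\rho=0$ is automatic for type reasons ($\omega$ has type $(1,1)$ while $\rho$ decomposes into $(3,0)$- and $(0,3)$-components), and the volume normalization is the defining property of $c$.

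It remains to verify the half-flat and double conditions. The equation $d\rho=0$ holds by hypothesis, while
\[
d\omega^2 \;=\; c^2\,d(\alpha^2) \;=\; c^2\,d(d\hat{\rho}) \;=\; 0,
\]
so $(\omega,\rho)$ is half-flat. Moreover, by construction,
\[
d\hat{\rho} \;=\; \alpha^2 \;=\; c^{-2}\,\omega^2,
\]
so $(\omega,\rho)$ is a double structure with constant $\nu_0\coloneqq c^{-2}>0$.

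There is no substantial obstacle: the argument is entirely algebraic on $\Lambda^{\bullet}\mathfrak{g}^*$, and the only nontrivial ingredient is Remark \ref{Michelsohn}, which simultaneously gives the $(1,1)$-type and the positivity of $\alpha$ with respect to $J_{\rho}$. Once $\alpha$ is in hand, the rescaling by $c$ is a routine matter of matching the $\text{SU}(3)$-normalization, and the double condition drops out immediately from $d\hat{\rho}=\alpha^2$.
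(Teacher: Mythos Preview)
Your proof is correct and follows essentially the same route as the paper's: invoke Remark~\ref{Michelsohn} to write $d\hat{\rho}=\alpha^2$ for a positive $(1,1)$-form $\alpha$, then fix the $\text{SU}(3)$ normalization by a scalar. The only cosmetic difference is that the paper rescales $\rho$ (using $J_{b\rho}=J_\rho$) while you rescale $\alpha$; both adjustments achieve $\rho\wedge\hat{\rho}=\tfrac{2}{3}\omega^3$, and your explicit check that $\omega\wedge\rho=0$ for type reasons is a detail the paper leaves implicit.
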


\begin{proof}
Let $\rho$ be a  closed strictly mean convex $\text{SL}(3,\mathbb{C})$-structure on $\mathfrak{g}$ and denote $\hat{\rho}= J_{\rho}\rho$ as usual. Then $d\hat{\rho}$ is a positive $(2,2)$-form and, as shown in \cite{Michelsohn} (see Remark \ref{Michelsohn}), there exists a positive $(1,1)$-form $\alpha$ such that $d\hat{\rho}=\alpha^2$.
Moreover, since $\alpha$ is positive with respect to  $J_{\rho}$, $\alpha^3$ is a positive multiple of the volume form $\rho\wedge \hat{\rho}$.
Since $J_{\rho}$ does not change for  a non-zero rescaling of $\rho$, this implies that  there exists $b\neq 0$  such that  $(b  \rho ,\alpha) $  is a double structure on $\mathfrak{g}$.
\end{proof}

As a consequence,  the classification of  nilpotent Lie algebras  admitting closed strictly mean convex $\text{SL}(3,\mathbb{C})$-structures  reduces to Theorem 4.11 in \cite{ChiossiSwann}.
Therefore, in the next two sections we weaken the condition asking for the existence of closed (non-strictly) mean convex  $\text{SL}(3,\mathbb{C})$-structures.

\section{Proof of Theorem A}\label{section4}

We recall that a \emph{nilmanifold}  $M= \Gamma \backslash G$  is a  compact quotient  of  a  connected, simply connected, nilpotent Lie group  $G$  by a  lattice  $\Gamma$.  We shall say that  an $ \text{SL}(3,\mathbb{C})$-structure $\rho$  (resp.  $\text{SU}(3)$-structure $(\omega,\rho)$)  is \emph{invariant}   if it is  induced by  a left-invariant one  on the nilpotent Lie group $G$.  Therefore, the study of  these types of structure is equivalent to the study   of $ \text{SL}(3,\mathbb{C})$-structures (resp.  $\text{SU}(3)$-structures)   on  the Lie algebra $\mathfrak{g}$ of $G$ and we can work at the level of nilpotent Lie algebras.

Six-dimensional nilpotent Lie algebras have been  classified in \cite{Goze, Magnin}. Up to isomorphism,
they are $34$, including the abelian algebra (see Table \ref{table1} for the list).
Using this classification we can prove Theorem A.

\begin{proof}[Proof of Theorem A]
Let $\mathfrak{g}$ be  the  Lie algebra of $G$. Every invariant  $\normalfont \text{SL}(3,\mathbb{C})$-structure on $M$
is determined by  an $\normalfont \text{SL}(3,\mathbb{C})$-structure on $\mathfrak{g}$ and vice versa.
First note that the possibility that $\mathfrak{g}$ is abelian is precluded by Definition \ref{meanconvex}.
Then, in order to prove the first part of the  theorem, we first  show the non existence result for the five Lie algebras $\mathfrak{g}_{1}$, $\mathfrak{g}_{2}$, $\mathfrak{g}_{4}$, $\mathfrak{g}_{9}$ and $\mathfrak{g}_{12}$.
For any of these  Lie algebras, let us consider a generic  closed 3-form
\[
\rho=\sum_{i<j<k}p_{ijk}\, e^{ijk}, \quad p_{ijk}\in \mathbb{R}.
\]
Let us assume that $\rho$ is definite, i.e.\ stable with $\lambda(\rho)<0$. 
Then $\rho$ induces an almost complex structure $J_{\rho}$ and we may ask if the induced $(2,2)$-form $d\hat \rho$ is semi-positive. 
Notice that the $1$-forms $\zeta^k=e^k-iJ_{\rho}e^k$, for $k=1,\ldots,6$, generate the space  $\Lambda^{1,0}\mathfrak{g}_i^*$ of $(1,0)$-forms with respect to $J_{\rho}$ on   $\frak g_i$, $i = 1,2,4,9,12.$ Here we are using the convention 
$J_{\rho}\alpha(v)=\alpha(J_{\rho}v)$ for any $\alpha\in \mathfrak{g}^*$, $v\in\mathfrak{g}$. 
So, for any closed definite $3$-form $\rho$, we extract a basis $(\xi^1,\xi^2,\xi^3)$ for $\Lambda^{1,0}\mathfrak{g}_i^*$, where $\xi^j=\zeta^{k_j}$ for some $k_j\in \{1,\ldots,6\}$ and $j=1,2,3$. Then, $(\xi^1,\xi^2,\xi^3, \overline{\xi}^1, \overline{\xi}^2, \overline{\xi}^3)$ is a complex basis for $\mathfrak{g}_i^*\otimes \mathbb{C}$ and we can write $d\hat{\rho}$ in this new basis as
\[
d\hat \rho=-\frac{1}{4}\sum_{\substack{i<k\\j<l}}\gamma_{i\overline{j}k\overline{l}} \, \xi^i\overline{\xi}^j\xi^k\overline{\xi}^l,
\]
for some $\gamma_{i\overline{j}k\overline{l}}\in \mathbb{C}$.
We note that the real one-forms 
\[
e^{k_j}=\frac{1}{2}(\xi^j+\overline{\xi}^j), \quad J_{\rho}(e^{k_j})=\frac{i}{2}(\xi^j-\overline{\xi}^j), \quad j=1,2,3,
\]
define a new real basis for $\mathfrak{g}_i^*.$ 
Now, following Section \ref{section2}, we consider the real $(1,1)$-form $\beta$  associated to $d\hat \rho$, given explicitly by
\begin{equation} \label{exprbeta}
 \beta=\frac{i}{2}\sum_{m,n} \beta_{m\overline{n}} \, \xi^m\overline{\xi}^n, \quad  \beta_{m\overline{n}}=\frac{1}{4}\sum_{i,j,k,l} \gamma_{i\overline{j}k\overline{l}}\epsilon_{ikm}\epsilon_{jln},
\end{equation}
and we  compute  the expression of $\beta_{m\overline{n}}$ in terms of $p_{ijk}$.
Therefore, $d\hat{\rho}$ is semi-positive (non-zero) if and only if the Hermitian matrix $(\beta_{m\overline{n}})$ is positive semi-definite, which occurs if and only if 
\begin{equation}\label{criterio}
\begin{dcases}
\beta_{k\overline{k}}\geq 0, &k=1,2,3, \\
\beta_{r\overline{r}}\beta_{k\overline{k}}-\lvert \beta_{r\overline{k}}\rvert^2\geq 0,  &r<k, \, \,r,k=1,2,3, \\
\det(\beta_{m\overline{n}})\geq 0,
\end{dcases}
\end{equation}
with $(\beta_{m\overline{n}})$ different from the zero matrix.

Then it can be shown that, for every closed $3$-form $\rho$   such that  $\lambda (\rho)<0$, the system \ref{criterio}  in the variables $p_{ijk}$    has no solutions.

Let us see this explicitly for  $\mathfrak{g}_i$, $i=1,2$. 
By a direct computation,  for the generic closed $3$-form  $\rho$ on $\mathfrak{g}_1$ we have
$$
\lambda(\rho)=\left[(p_{145}+2p_{235})p_{146}+p_{145}p_{236}+p_{245}^2\right]^2 +4p_{146}p_{236}\left(p_{126}-p_{145}p_{235}+p_{135}p_{245}\right)
$$
and, for the generic closed $3$-form  $\rho$ on $\mathfrak{g}_2$,  we get
$$
\lambda(\rho)=\left(p_{245}^2+p_{145}p_{236}+2p_{146}p_{235}\right)^2+4p_{146}p_{236}\left(-p_{145}p_{235}+p_{135}p_{245}+p_{125}p_{146} \right).
$$
Notice that, if at least one between $p_{146}$ and $p_{236}$ is equal to zero, then $\lambda(\rho)\geq 0$. So let us assume that both $p_{146}, \, p_{236}$ are non-zero. Then $(e^1,J_{\rho} e^1,e^2,J_{\rho} e^2,e^5,J_{\rho} e^5)$ defines a  basis of $\mathfrak{g}_i^*$, for $i=1,2$, hence  ($\xi^1=e^1-iJ_{\rho} e^1,\xi^2=e^2-iJ_{\rho} e^2,\xi^3=e^5-iJ_{\rho} e^5$)  is  a basis of $(1,0)$-forms on $\mathfrak{g}_i$, $i=1,2$. 
By a direct computation, it can be shown that in these cases the matrix coefficient $\beta_{1\overline{1}}$ vanishes and so
$ \beta_{1\overline{1}}\beta_{3\overline{3}}-\lvert \beta_{1\overline{3}}\rvert^2=-\lvert \beta_{1\overline{3}}\rvert^2 \leq 0$, but $\beta_{1\overline{3}}=0$ implies $\lambda(\rho)=0$ which is a contradiction.

By a very similar discussion, we may discard cases $\mathfrak{g}_4$,  $\mathfrak{g}_9$ and  $\mathfrak{g}_{12}$ as well.
In order to prove the second part of the theorem, we construct an explicit  mean convex  closed $\text{SU}(3)$-structure $(\omega, \rho)$  on the remaining nilpotent Lie algebras  (see Table \ref{table2}).
\end{proof}

\section{Proof of Theorem B}\label{section5} 

In \cite{Conti}, a  classification up to isomorphism  of  $6$-dimensional real nilpotent Lie algebras admitting half-flat structures was given.  The non-abelian ones are twenty three  and they   are listed in Table \ref{table1}. 
So, in order to classify nilpotent  Lie algebras admitting  a mean convex  half-flat structure, we restrict our attention to this list.
An explicit example of mean convex  half-flat structure on  $\mathfrak{g}_i$, $i=6,7,8,10,13,15,16,22, 24,$ $25, 28,$ $29,30,31,32,33$,  is already  given in Table \ref{table2}.
Therefore,  we only need to prove non-existence of  mean convex  half-flat structures on the remaining Lie algebras $\mathfrak{g}_i$, $i=4,9,11,12,14,21,27$.
By Theorem \hyperref[theoremA]{A}, we may immediately  exclude   the Lie algebras $\mathfrak{g}_i$, $i=4,9,12$, since mean convex half-flat structures   are in particular mean convex  closed $\text{SL}(3,\mathbb{C})$-structures.

For the remaining Lie algebras $\mathfrak{g}_i$, $i=11,14,21,27$, whose first Betti number is $3$ or $4$,
we first collect some necessary conditions to the existence of mean convex closed $\text{SU}(3)$-structures $(\omega, \rho)$ in terms of a filtration of $J_{\rho}$-invariant subspaces $U_i$  of $\mathfrak{g}^*$, and then, by working in an $\text{SU}(3)$-adapted basis, we exhibit further obstructions.

Let us start by defining the  filtration $\{ U_i \}$ as in \cite{ChiossiSwann}.  Let $(\omega,\rho)$ be an $\text{SU}(3)$-structure on  a $6$-dimensional nilpotent Lie algebra $\frak g$  and let $(g,J_{\rho})$ be the induced almost Hermitian structure on $\mathfrak{g}$.  By nilpotency there exists a basis $\left(\alpha^1,\ldots,\alpha^6\right)$ of $\mathfrak{g}^*$   such that, if we denote $V_j\coloneqq \left< \alpha^1,\ldots,\alpha^j\right>$, then $dV_j\subset \Lambda^2V_{j-1}$ and, by construction, $0\subset V_1\subset\ldots \subset V_5\subset V_6=\mathfrak{g}^*$.
We notice that the basis $(e^i)$ whose corresponding structure equations are given in Table \ref{table1} satisfies the previous conditions and $V_i=\ker d$ when $b_1(\mathfrak{g})=i$. In the following, we consider $V_i= \left< e^1,\ldots,e^i\right>$.
As in \cite{ChiossiSwann}, let $U_j\coloneqq V_j\cap J_{\rho} V_j$ be the maximal $J_{\rho}$-invariant subspace of $V_j$ for each $j$. Then, since $J_{\rho}$ is an automorphism  of  the vector space $\mathfrak{g}$,  a simple dimensional computation shows that $\dim_{\mathbb{R}}U_2$, $\dim_{\mathbb{R}}U_3 \in \{0,2\}$, $\dim_{\mathbb{R}}U_4\in \{2,4\}$ and $\dim_{\mathbb{R}}U_5=4$.
Note that the filtration $\{ U_i \}$ depends on $V_i$ and the  almost complex structure $J_{\rho}$.

We can  prove the following

\begin{lemma}\label{U3=2,U2=2} 
Let $\rho$ be  a mean convex closed $\normalfont \text{SL}(3,\mathbb{C})$-structure on a nilpotent Lie algebra  $\mathfrak{g}$. If $\mathfrak{g}$ is isomorphic to 
\[\mathfrak{g}_{11}=(0,0,0,e^{12},e^{14},e^{15}+e^{23}+e^{24}) \quad \text{or} \quad \mathfrak{g}_{14}=(0,0,0,e^{12},e^{13},e^{14}+e^{35}), 
\]
then $U_3=U_4$. 
If $\mathfrak{g}$ is isomorphic to 
\[
\mathfrak{g}_{21}=(0,0,0,e^{12},e^{13},e^{14}+e^{23})  \quad \text{or} \quad   \mathfrak{g}_{27}=(0,0,0,0,e^{12},e^{14}+e^{25}),
\]
then $\dim_{\mathbb{R}}U_2=2$, or equivalently  $\left < e^1, e^2  \right > $  is  $J_{\rho}$-invariant.
Moreover, on $\mathfrak{g}_{21}$, up to isomorphism, we also have $\dim_{\mathbb{R}}U_4=4$.
\end{lemma}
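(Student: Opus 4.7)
The plan is to argue separately for each of the four Lie algebras by a case analysis on the possible dimensions of the $J_{\rho}$-invariant subspaces $U_j = V_j \cap J_{\rho} V_j$, parametrizing the closed definite $3$-forms explicitly and using mean convexity as in the proof of Theorem~A. For each algebra, I would write a generic $3$-form $\rho = \sum_{i<j<k} p_{ijk}\, e^{ijk}$, impose $d\rho = 0$ to obtain linear relations among the $p_{ijk}$, and then impose $\lambda(\rho)<0$ so that $J_{\rho}$ is well defined. The central tool is Proposition~\ref{prop3}(iii): the mean convex hypothesis is equivalent to positive semi-definiteness of the Hermitian matrix $(\beta_{m\overline n})$ built from $d\hat\rho$ via \ref{exprbeta}, so any choice of $J_{\rho}$-invariant filtration that forces a principal minor of $(\beta_{m\overline n})$ to be strictly negative can be ruled out.

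For $\mathfrak{g}_{11}$ and $\mathfrak{g}_{14}$, where $V_3 = \ker d$, I would show that $U_3 = U_4$ (both of dimension $2$) by excluding the two remaining possibilities: $\dim U_3 = 0$ and $\dim U_3 = 2$ with $\dim U_4 = 4$. In the first sub-case $V_3$ contains no $J_{\rho}$-complex line, so any basis $(\xi^1,\xi^2,\xi^3)$ of $(1,0)$-forms must involve the three generators of $V_3$ non-trivially; this forces certain diagonal entries $\beta_{k\overline k}$ coming from $d\hat\rho$ to vanish while some off-diagonal $\beta_{k\overline \ell}$ do not, violating the $2\times 2$ minor condition in \ref{criterio}. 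In the second sub-case $V_4$ is $J_{\rho}$-invariant and contains $U_3$ strictly, so one may adapt a basis $(\xi^1,\xi^2,\xi^3)$ with $\xi^1, \xi^2 \in V_4\otimes \mathbb C$ and $\xi^2 \in U_3\otimes \mathbb C$; expanding $d\hat\rho$ in this basis, the structure equations of $\mathfrak g_{11}$ and $\mathfrak g_{14}$ make $\beta_{1\overline 1}$ vanish, and a non-vanishing $\beta_{1\overline 3}$ is forced by $\lambda(\rho)\neq 0$, again contradicting \ref{criterio}.

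For $\mathfrak{g}_{21}$ and $\mathfrak{g}_{27}$ the first Betti number is $3$ and $4$ respectively, so the focus shifts to the subspace $V_2 = \langle e^1, e^2\rangle$ that generates the whole derived ideal. Here I would assume $\dim U_2 = 0$ (equivalently $V_2$ is not $J_{\rho}$-invariant) and choose $\xi^1$ such that $e^1$ or $e^2$ is a non-trivial component of $\operatorname{Re}\xi^1$; a direct computation of $d\hat\rho$ using the structure equations of $\mathfrak g_{21}$, resp.\ $\mathfrak g_{27}$, shows that the associated $\beta_{1\overline 1}$ vanishes while either $\beta_{1\overline 2}$ or the $2\times 2$ minor $\beta_{1\overline 1}\beta_{2\overline 2}-|\beta_{1\overline 2}|^2$ has the wrong sign, contradicting mean convexity. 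For the extra claim on $\mathfrak{g}_{21}$, namely $\dim U_4 = 4$, I would use the action of $\operatorname{Aut}(\mathfrak g_{21})$ on the set of closed $\text{SL}(3,\mathbb C)$-structures to normalize the remaining free parameters, reducing the alternative $\dim U_4 = 2$ to a sub-case already excluded by the previous analysis.

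The main obstacle is purely computational: for each of the four algebras one must handle a family of closed $3$-forms depending on many parameters, compute $J_{\rho}$ through $K_{\rho}$, translate the condition on $(U_j)$ into polynomial conditions on the $p_{ijk}$, and show that these conditions force a principal minor of $(\beta_{m\overline n})$ to be strictly negative. Organising this efficiently requires choosing, in each case, a basis of $(1,0)$-forms adapted both to the nilpotent filtration $V_j$ and to the prescribed dimensions $\dim U_j$, so that only a small number of Hermitian minors of $(\beta_{m\overline n})$ actually need to be analyzed.
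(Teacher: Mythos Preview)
Your overall strategy matches the paper's: parametrize the generic closed $3$-form by coefficients $p_{ijk}$, impose $\lambda(\rho)<0$, and test mean convexity through the Hermitian matrix $(\beta_{m\overline n})$ built from $d\hat\rho$ via \ref{exprbeta}. The difference is purely in how the case analysis is organised. You branch first on the values of $\dim U_j$, assume the unwanted one, pick a $(1,0)$-basis adapted to that hypothesis, and look for a contradiction in the minors of $(\beta_{m\overline n})$. The paper instead branches on which coefficients $p_{ijk}$ vanish: this is what actually determines which of the forms $\zeta^k=e^k-iJ_\rho e^k$ are linearly independent and hence which triple serves as a basis of $\Lambda^{1,0}\mathfrak g_i^*$. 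In each coefficient branch one computes $(\beta_{m\overline n})$ concretely and either gets a contradiction with $\lambda(\rho)<0$ or survives with explicit constraints on the $p_{ijk}$; the statements about the $U_j$ are then read off from the surviving constraints rather than assumed in advance. For instance, on $\mathfrak g_{11}$ the only branch compatible with mean convexity is $p_{236}=p_{246}\neq0$, and one checks directly that this forces $J_\rho$ to preserve $V_2=\langle e^1,e^2\rangle$ while $J_\rho e^3\notin V_4$, giving $U_2=U_3=U_4$. On $\mathfrak g_{14}$ the surviving constraint is $p_{245}=0$, $p_{356}\neq0$, from which $U_3=\langle e^1,e^3\rangle$ and $J_\rho e^2\notin V_5$ follow. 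On $\mathfrak g_{21}$ and $\mathfrak g_{27}$ the surviving constraint is $p_{345}=0$, and this alone forces $V_2$ (and, for $\mathfrak g_{21}$, also $V_4$) to be $J_\rho$-invariant---so the extra claim $\dim U_4=4$ on $\mathfrak g_{21}$ comes for free, with no appeal to $\operatorname{Aut}(\mathfrak g_{21})$.

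Your inverted organisation is not wrong, but it is less direct: the hypotheses ``$\dim U_j=k$'' are conditions on $J_\rho$, and to convert them into usable polynomial conditions on the $p_{ijk}$ you must compute $J_\rho$ first, which is exactly the paper's starting point. In practice your branches would collapse onto the same coefficient cases. The specific assertions in your sketch about which entries $\beta_{m\overline n}$ vanish (``$\beta_{1\overline1}=0$ while $\beta_{1\overline3}\neq0$'', etc.) are plausible but not justified by the assumed value of $\dim U_j$ alone; in the paper they emerge as outcomes of the explicit computation in each coefficient branch.
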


\begin{proof}
On each  Lie algebra $\mathfrak{g}_i$, $i =11, 14, 21, 27$,  we consider the  generic  closed    3-form
\[
\rho=\sum_{i<j<k}p_{ijk}\, e^{ijk}, \quad p_{ijk}\in \mathbb{R}
\] 
and we impose  $ \lambda (\rho) <0$ and the mean convex condition. 
First, by a  direct computation on each Lie algebra, we determine the expression of $\lambda (\rho)$ in terms of the coefficients $p_{ijk}$ and a basis of $(1,0)$-forms with respect to $J_{\rho}$.  Then we exclude  the  cases  where  either  $\lambda (\rho)  \geq 0$  or the matrix $(\beta_{m\overline{n}})$ associated  to  $d\hat{\rho}$ is not  positive semi-definite. As in the proof of Theorem \hyperref[theoremA]{A} we first  extract a basis of $(1,0)$-forms  from the set of generators $\{ \zeta^ i  \}$ and we use \ref{exprbeta} to compute  $(\beta_{m\overline{n}})$ in terms of $p_{ijk}$.  We shall give  all the details for the Lie algebra $\mathfrak{g}_{11}$. For the other cases the computations are similar and we only report the necessary conditions on $p_{ijk}$.   The generic closed $3$-form $\rho$  on  the Lie algebra $\mathfrak{g}_{11}$  has
\begin{align*}
\lambda(\rho)=& ( p_{126}p_{236}-p_{126}p_{146}-p_{135}p_{246}+p_{145}p_{236}+p_{146}p_{235}-p_{146}p_{245} +p_{234}p_{246} \\ & -p_{235}p_{245})^2+ 4p_{246}(p_{123}p_{236}p_{246}-p_{123}p_{246}^2-p_{124}p_{236}^2 +p_{124}p_{236}p_{246} \\
&+2p_{125}p_{146}p_{236} -p_{125}p_{146}p_{246}+p_{125}p_{235}p_{236}-p_{125}p_{235}p_{246}-p_{134}p_{235}p_{246} \\
& +p_{134}p_{236}p_{245}-p_{125}p_{146}p_{246}+p_{135}p_{234}p_{246}-p_{135}p_{235}p_{245}+p_{145}p_{146}p_{235} \\
&+p_{145}p_{235}^2-p_{145}p_{234}p_{236}) +4p_{146}p_{236}(-p_{125}p_{236}+p_{135}p_{235}-p_{145}p_{235}).
 \end{align*}
Then we  have  the following possibilities: 
\begin{itemize}
\item[(a)] $p_{246}\neq 0, p_{246}\neq p_{236}$. Then $\left(e^1-iJ_{\rho}e^1,e^2-iJ_{\rho}e^2,e^3-iJ_{\rho}e^3\right)$ is a basis for $\Lambda^{1,0}\mathfrak{g}_{11}^*$, but $(\beta_{m\overline{n}})$ being positive semi-definite implies $\lambda (\rho) =0$, a contradiction.
\item[(b)]  $p_{246}=0, p_{236}\neq 0, p_{146}\neq 0$. Taking $\left(e^1-iJ_{\rho}e^1,e^2-iJ_{\rho}e^2,e^5-iJ_{\rho}e^5\right)$ as a basis for $\Lambda^{1,0}\mathfrak{g}_{11}^*$, again we find that $(\beta_{m\overline{n}})$ being positive semi-definite implies $\lambda (\rho) =0$.
\item[(c)]  $p_{246}=p_{236}=0,$ or $p_{246}=p_{146}=0$, but  then $\lambda (\rho) \geq 0$.
\item[(d)]  $p_{236}=p_{246}\neq 0$. In particular this implies  that $V_2=\left<e^1,e^2\right>$ is $J_{\rho}$-invariant, i.e., 
$\dim_{\mathbb{R}}U_2=2$. Notice also that, since $J_{\rho}  e^3 (e_6)=0$  if and only if $p_{236}=0$,
  we also have that  $V_4=\left<e^1,e^2,e^3,e^4\right>$ is not $J_{\rho}$-invariant, hence $U_2=U_3=U_4$.
\end{itemize} 

By a very similar discussion, one can show that a generic  mean convex closed $\normalfont \text{SL}(3,\mathbb{C})$-structure   $\rho$ on $\mathfrak{g}_{14}$ must have $p_{245}=0$  and $p_{356}\neq 0$. In particular, since $J_{\rho} e^1, J_{\rho} e^3 \in \left<e^1,e^3\right>$, we have $\dim_{\mathbb{R}} U_3=2$.
Moreover, $J_{\rho}  e^2 (e_6)\neq 0$, hence $\dim_{\mathbb{R}}U_2=0$ and $U_3=U_4$.

Analogously, every  mean convex closed $\normalfont \text{SL}(3,\mathbb{C})$-structure  $\rho$ on $\mathfrak{g}_{21}$ must have $p_{345}=0$.
This implies that $V_2$ and $V_4$ are $J_{\rho}$-invariant, so that $\dim_{\mathbb{R}} U_2=2$, $\dim_{\mathbb{R}}U_4=4$ and $U_2=U_3$.

Finally, a  mean convex closed $\normalfont \text{SL}(3,\mathbb{C})$-structure $\rho$ on $\mathfrak{g}_{27}$ must have $p_{345}=0$. In particular this implies that $V_2$ is $J_{\rho}$-invariant so that $U_2=U_3$.
\end{proof}

Now we can prove Theorem B.

\begin{proof}[Proof of Theorem B]
Starting from the classification of  half-flat nilpotent Lie algebras given in \cite{Conti},
we divide the discussion depending on the first Betti number $b_1$ of $\mathfrak{g}$.

When $b_1(\mathfrak{g})=2$, the claim follows directly by Theorem \hyperref[theoremA]{A}. In particular we have seen that $\mathfrak{g}_4$ cannot admit mean convex closed $\text{SL}(3,\mathbb{C})$-structures and, for the remaining Lie algebras $\mathfrak{g}_6$, $\mathfrak{g}_7$ and $\mathfrak{g}_8$ from Table \ref{table1}, we provide an explicit example in Table \ref{table2} on the respective Lie algebras.

Analogously, when $b_1(\mathfrak{g})=3$, an explicit example of mean convex  half-flat structure on $\frak g_i$, $i=10,13,15,16,22,24$, is given in Table \ref{table2}.
By Theorem \hyperref[theoremA]{A}, we may
exclude the existence of mean convex  half-flat structures  on $\mathfrak{g}_9$ and $\mathfrak{g}_{12}$.
For the remaining  Lie algebras $\mathfrak{g}_{i}$, $i=11,14,21$, let $(\omega,\rho)$ be a mean convex  half-flat  structure on $\mathfrak{g}_i$. Then,  by  Lemma \ref{U3=2,U2=2}, with respect to the fixed nilpotent filtration $V_i = \left < e^1, \ldots, e^i \right >$, we may assume $\dim_{\mathbb{R}}U_3=2$. Using this and the information on $U_4$ we collected in Lemma \ref{U3=2,U2=2}, we  shall show that on the three Lie algebras there exists an adapted basis  $(f^i)$ with dual basis $(f_i)$ such that $df^1=df^2=0$ and $f_6\in \xi(\mathfrak{g}_i)$, where by  $ \xi(\mathfrak{g}_i)$ we denote   the center of $\frak g_i$.

To see this, let us consider the case of $\mathfrak{g}_{21}$, first. Then we may assume $\dim_{\mathbb{R}}U_4=4$. This occurs if and only if $V_4=J_{\rho} V_4$. In particular, we may choose a $g$-orthonormal basis $\left(f^1,f^2\right)$ of $U_3$ such that $J_{\rho} f^1=-f^2$, take $f^3,f^4\in U_3^{\perp}\cap U_4$ of unit norm such that $J_{\rho} f^3=-f^4$, and complete it to a basis for $\mathfrak{g}_{21}^*$ by choosing $f^5\in U_4^{\perp}\cap V_5$ and $f^6\in  U_4^{\perp} \cap J_{\rho}V_5$ of unit norm such that $J_{\rho} f^5=-f^6$. Then, by construction, $\left(f^1,\ldots,f^6\right)$ is an adapted basis for the $\text{SU}(3)$-structure $(\omega,\rho)$. In particular, since $V_5=\left<f^1,f^2,f^3,f^4,f^5\right>$, the inclusion $dV_j\subset \Lambda^2(V_{j-1})$ implies $f_6\in \xi(\mathfrak{g}_{21})$. Therefore, since $f^1,f^2\in V_3=\ker d$, we have $df^1=df^2=0$.

Now we consider $\mathfrak{g}_{11}$ and $\mathfrak{g}_{14}$. By Lemma \ref{U3=2,U2=2}, we can assume $\dim_{\mathbb{R}}U_4=2$ for  both  Lie algebras.
As shown in \cite{ChiossiSwann}, since $U_4,V_3\subset V_4$, we have $\dim_{\mathbb{R}}(U_4\cap V_3)\geq 1$ and we may take $\left(f^1,f^2\right)$ to be a unitary basis of $U_4$ with $f^1\in V_3$. Then, since $U_3\subset V_3=\ker d$, we may suppose $df^1=df^2=0$.
Analogously, since $\dim_{\mathbb{R}}(V_4\cap J_{\rho}V_5)\geq 3$ and
$U_5\cap V_4=V_5 \cap J_{\rho}V_5\cap V_4= V_4\cap J_{\rho}V_5$, then $\dim_{\mathbb{R}}(U_5\cap V_4)\geq 3$, from which 
$\dim_{\mathbb{R}}(U_5\cap V_4\cap U_4^{\perp})\geq 1$ follows. Then we may take $\left(f^3,f^4\right)$ to be a unitary basis of $U_4^{\perp}\cap U_5$ with $f^3\in V_4$. 
Finally, since  $\dim_{\mathbb{R}}(U_5^{\perp}\cap V_5)\geq 1$, we may take a unitary basis $\left(f^5,f^6\right)$ of $U_5^{\perp}$ with $f^5\in V_5$. By construction, $\left(f^1,f^2,\ldots,f^6\right)$ is an adapted basis for $(\omega,\rho)$. In particular, since $U_5\subset V_5$, we also have $V_5=\left<f^1,f^2,f^3,f^4,f^5\right>$, which implies $f_6\in \xi(\mathfrak{g_i})$. for $i=11,14$. This proves our claim. 

Now, we shall show that   the three Lie algebras $\mathfrak{g}_{i}$, $i=11,14,21$,  do not admit any  mean convex half-flat structures.
By contradiction, let us suppose there exists a nilpotent Lie algebra $\mathfrak{g}$ endowed with a mean convex half-flat structure $(\omega,\rho)$ which is isomorphic to $\mathfrak{g}_{11}$, $\mathfrak{g}_{14}$ or $\mathfrak{g}_{21}$.  By the previous discussion, without loss of generality, we may assume  that there exists an adapted basis $(f^i)$, i.e. satisfying
$$
\omega=f^{12}+f^{34}+f^{56},\quad \rho=f^{135}-f^{146}-f^{236}-f^{245},  \quad \hat{\rho}=f^{136}+f^{145}+f^{235}-f^{246},
$$
and   such that  $df^1=df^2=0$, $f_6\in \xi(\mathfrak{g})$.
In particular, $\frak g$ has structure equations
\[
df^1=df^2=0, \quad \displaystyle df^k=-\sum_{\substack{ i<j\\ i,j=1}}^5 c_{ij}^k f^{ij}, \quad k=3,4,5,6.
\]
By imposing the unimodularity  of $\frak g$,   i.e.  $ \sum_{j} c_{ij}^j=0$, for all $i=1,\ldots,6$,  and  that $(\omega, \rho)$ is  half-flat, 
 we can show by a direct computation that, if  $c_{34}^5\neq 0$, then  the Jacobi identities   $d^2 f^i =0$, $i = 3, \ldots, 6$, are equivalent to the conditions
\[
c_{15}^4=c_{25}^4=c_{25}^3=c_{15}^6=c_{13}^4=c_{14}^4=c_{13}^3=c_{23}^3=c_{24}^3=0,   
\]
which imply $b_1(\mathfrak{g})\geq 4$,  so we can exclude this case.
Then we must have  $c_{34}^5=0$. Let  us assume $c_{12}^6\neq 0$.
Again a straightforward computation shows that
$d^2f^6=0$ implies 
\[
c_{25}^3=c_{25}^4=c_{15}^4=0, \quad c_{13}^3=-c_{14}^4, \quad c_{23}^3=-c_{13}^4-c_{15}^6.
\]
Now let us look at the mean convex condition. 
Since we are working in the adapted basis $(f^i)$, using \ref{exprbeta} we obtain that the matrix $(\beta_{m\overline{n}})$ associated to $d\hat{\rho}$, with respect to the  basis  $(\xi^1=f^1+if^2,\xi^2=f^3+if^4,\xi^3=f^5+if^6)$,  is given  by
\[
\begin{pmatrix}
0 & 0 & 0 \\ 
0 & 0 & c_{15}^6-i(c_{24}^3+c_{14}^4) \\ 
0 & c_{15}^6+i(c_{24}^3+c_{14}^4) & -c_{14}^5-c_{13}^6+c_{24}^6-c_{23}^5
\end{pmatrix} .
\]
Therefore $d\hat{\rho}$ is semipositive if and only if $c_{15}^6=0$, $c_{24}^3=-c_{14}^4$
and $ -c_{14}^5-c_{13}^6+c_{24}^6-c_{23}^5>0$. In particular, $c_{15}^6=0$ and $c_{24}^3=-c_{14}^4$ imply that the Jacobi identities hold if and only if $c_{13}^4=c_{14}^4=0$.
However, this also implies $df^3=df^4=0$ so that $b_1(\mathfrak{g})\geq 4$ and we have to discard this case as well.
Therefore $c_{34}^5=c_{12}^6=0$ and, as a consequence,
\begin{equation}\label{b1=3}
\begin{aligned}
df^3 = & -c_{13}^3f^{13}-(c_{13}^4+c_{15}^6)f^{14}-c_{25}^4f^{15}-c_{23}^3 f^{23}-c_{24}^3 f^{24}-c_{25}^3 f^{25}, \\
df^4 = & -c_{13}^4f^{13}-c_{14}^4 f^{14}-c_{15}^4 f^{15}-c_{13}^3 f^{23}-(c_{13}^4 + c_{15}^6) f^{24}-c_{25}^4 f^{25},\\
df^5 = & -(c_{14}^6+c_{23}^6+c_{24}^5)f^{13}-c_{14}^5 f^{14}+(c_{14}^4+c_{13}^3) f^{15}-c_{23}^5 f^{23}-c_{24}^5 f^{24} \\ 
&+(c_{23}^3+  c_{13}^4 +c_{15}^6) f^{25}, \\
df^6= & -c_{13}^6 f^{13}-c_{14}^6 f^{14}-c_{15}^6 f^{15}-c_{23}^6 f^{23}-c_{24}^6 f^{24}-(c_{24}^3-c_{13}^3) f^{25}.
\end{aligned}
\end{equation}
In particular,  $f^{12}$   is a non-exact $2$-form  belonging to $\Lambda^2(\ker d)$ such that 
$f^{12} \wedge d\mathfrak{g}^*=0$.
On the other hand, a simple computation shows that for any Lie algebra $\mathfrak{g}_i$, for $i=11,14,21$, a $2$-form
$\alpha\in \Lambda^2(\ker d)$ such that $\alpha\wedge d\mathfrak{g}_i^*=0$ is necessarily exact, so we get a contradiction.
This concludes the non-existence part of the proof in the case $b_1=3$.

Now we consider the remaining case $b_1(\mathfrak{g})\geq 4$.
An explicit example of mean convex half-flat structure on $\mathfrak{g}_i$, $i=25,28,29,30,31,32,\allowbreak33$, is given in Table \ref{table2}.
Then, we only need to prove the non-existence of  mean convex half-flat structures on $\mathfrak{g}_{27}$. 

Let $(\omega,\rho)$ be a mean convex half-flat structure on $\mathfrak{g}_{27}$. 
We claim that on $\mathfrak{g}_{27}$ there exists an adapted basis  $(f^i)$ such that $df^1=df^2=df^3=0$ and $f_{6}\in \xi(\mathfrak{g}_{27})$. 
By Lemma \ref{U3=2,U2=2}, we can assume $U_2=U_3$ with $\dim_{\mathbb{R}}U_3=2$. 
We recall that $U_4$ has dimension $2$ or $4$.
Let us suppose $\dim_{\mathbb{R}}U_4=4$, first. We note that in this case the existence of an adapted basis $(f^i)$ for $(\omega,\rho)$   such that $f_6\in \xi(\mathfrak{g}_{27})$ and $V_4=U_4=\left<f^1,f^2,f^3,f^4\right>$ follows from the previous discussion on $\mathfrak{g}_{21}$, where we only used $\dim_{\mathbb{R}}U_2=2$ and $\dim_{\mathbb{R}}U_4=4$. In particular, since $V_4=\ker d$ on $\mathfrak{g}_{27}$, in this case we also have $df^1=df^2=df^3=df^4=0$.
When $\dim_{\mathbb{R}}U_4=2$ instead, since $U_2=U_3=U_4$, the discussion is the same as for $\mathfrak{g}_{11}$ and $\mathfrak{g}_{14}$, where we only used $U_3=U_4$ to find an adapted basis such that $df^1=df^2=0$  and $f_6$ lying in the center. In particular, since by construction $f^1,f^2,f^3\in V_4$, on $\mathfrak{g}_{27}$ we also have $df^3=0$, since $V_4=\ker d$. This proves our claim on $\mathfrak{g}_{27}$.
Now, using this claim  we shall show that  $\mathfrak{g}_{27}$ does not admit  any  mean convex half-flat structures.
Like in the previous cases, by contradiction, let us suppose there exists a nilpotent Lie algebra $\mathfrak{g}$ isomorphic to $\mathfrak{g}_{27}$  admitting   a mean convex half-flat structure $(\omega,\rho)$.  Then we may assume that   there exists  on $\frak g$ an adapted basis   $(f^i)$  for $(\omega,\rho)$  such that $df^1=df^2=df^3=0$ and $V_5=\left<f^1,f^2,f^3,f^4,f^5\right>$, so that $f_6\in \xi(\mathfrak{g})$.
Then 
\[ \displaystyle df^k=-\sum_{\substack{ i<j\\ i,j=1}}^5 c_{ij}^k f^{ij}, \quad k=4,5,6.
\]
By imposing the unimodularity of $\frak g$ and that $(\omega,\rho)$  is half-flat, we get
\begin{equation}\label{b1=4}
\begin{aligned}
df^4 = & c_{15}^6 f^{13}-c_{14}^4 f^{14}-c_{15}^4 f^{15},\\
df^5 = & c_{34}^5 f^{12}-(c_{24}^5+c_{14}^6+c_{23}^6) f^{13}-c_{14}^5 f^{14}+c_{14}^4 f^{15}-c_{23}^5 f^{23} \\ 
&-c_{24}^5 f^{24}-c_{34}^5 f^{34}, \\
df^6= & -c_{12}^6 f^{12}-c_{13}^6 f^{13}-c_{14}^6 f^{14}-c_{15}^6 f^{15}-c_{23}^6 f^{23}-c_{24}^6 f^{24}+c_{12}^6 f^{34}.
\end{aligned}
\end{equation}
Since  $b_1 (\frak g) =4$,  there  should exist a closed 1-form linearly independent from $f^1, f^2$ and $f^3$. Moreover,  since $\ker d=V_4\subset V_5= \left<f^1,f^2,f^3,f^4,f^5\right>$, the matrix $C$ associated to
\[
d:\left<f^4,f^5\right> \to \Lambda^2V_5 = \Lambda^2\left<f^1,f^2,f^3,f^4,f^5\right>
\]
 must have rank equal to $1$. This is equivalent to requiring that  $C$ is not the zero matrix  and all the $2\times 2$ minors of $C$ vanish. After eliminating all the zero rows,
 we have
\[
C=\begin{pmatrix}
0 & c_{34}^5 \\ 
c_{15}^6 & -c_{24}^5-c_{14}^6-c_{23}^6 \\ 
-c_{14}^4 & -c_{14}^5 \\ 
-c_{15}^4 & c_{14}^4 \\ 
0 & -c_{23}^5 \\ 
0 & -c_{24}^5 \\ 
0 & -c_{34}^5
\end{pmatrix} .
\]
By using that $(f^i)$ is an adapted basis and  \ref{exprbeta}, we get
\[
 (\beta_{m\overline{n}})  = \begin{pmatrix}
0 & 0 & 0 \\ 
0 & c_{15}^4 & c_{15}^6-ic_{14}^4 \\ 
0 & c_{15}^6+ic_{14}^4 & -c_{14}^5-c_{13}^6+c_{24}^6-c_{23}^5
\end{pmatrix}.
\]
Let us suppose $c_{15}^4=0$. Then $(\beta_{m\overline{n}})$ being positive semi-definite implies $c_{14}^4=c_{15}^6=0$, from which it follows that $\mathfrak{g}$ is $2$-step nilpotent, so that we can discard this case since $\mathfrak{g}_{27}$ is $3$-step nilpotent.   Thus,  we have to impose $c_{15}^4\neq 0$.
As a consequence,  $d^2f^i=0$, $i =4,5,6$, if and only if 
$c_{24}^5=c_{34}^5=c_{24}^6=c_{23}^5=c_{12}^6=0,$
from which it follows that $b_1(\mathfrak{g})=4$ holds if and only if
\[
c_{14}^5=-\frac{c_{14}^4}{c_{15}^4}, \quad c_{14}^6=\frac{c_{14}^4 c_{15}^6-c_{15}^4 c_{23}^6}{c_{15}^4}.
\]
Then  $\frak g$ must have structure equations  
\begin{equation}\label{g27}
\begin{aligned}
df^1=& df^2=df^3=0, \\
df^4=& c_{15}^6 f^{13}-c_{14}^4 f^{14}-c_{15}^4 f^{15}, \\
df^5=& -\frac{c_{14}^4 c_{15}^6}{c_{15}^4} f^{13}+\frac{(c_{14}^4)^2}{c_{15}^4} f^{14} + c_{14}^4 f^{15}, \\
df^6=& -c_{13}^6 f^{13}-\frac{c_{14}^4 c_{15}^6-c_{15}^4 c_{23}^6}{c_{15}^4} f^{14}-c_{15}^6 f^{15}-c_{23}^6 f^{23}.
\end{aligned}
\end{equation}
Note that, by \ref{g27},  $\mathfrak{g}$ has the same central and derived series as $\mathfrak{g}_{27}$  and, if  $c_{23}^6=0$, $\mathfrak{g}$ is almost abelian,   so it cannot be isomorphic to $\mathfrak{g}_{27}$.  Thus  we can suppose $c_{23}^6\neq 0$.
By \cite{Conti},  a $6$-dimensional 3-step nilpotent Lie algebra having $b_1=4$ and admitting a half-flat  structure must be isomorphic to either  $\mathfrak{g}_{25}$  or  $\mathfrak{g}_{27}$. 
In addition, $b_2(\mathfrak{g}_{25})=6$, while $b_2(\mathfrak{g}_{27})=7$. We shall show that  we cannot have  $b_2(\mathfrak{g})=7$ and so we shall get a contradiction. To this aim we need to compute the space  $Z^2$ of  closed $2$-forms. By a direct computation using  \ref{g27} and $c_{23}^6\neq 0$,   it follows that 
 $\dim Z^2=\dim \Lambda^2 V_4+2=8$.
Therefore, in order to get $b_2(\mathfrak{g})=7$, we have to require that  the space $B^2$  of exact 2-forms  is one-dimensional. 
This is equivalent to asking that the linear map
\[
d\rvert_{\left< f^4,f^5,f^6\right>}:\left< f^4,f^5,f^6\right> \to \Lambda^2 \mathfrak{g}^*,
\]
has rank equal to $1$.
Let us denote by $E$ the matrix associated to $d\rvert_{\left< f^4,f^5,f^6\right>}$ in the induced basis ($f^{ij}$) of $\Lambda^2 \mathfrak{g}^*$. Eliminating all the zero rows, one has
\[
E=\begin{pmatrix}
c_{15}^6 & -\dfrac{c_{14}^4 c_{15}^6}{c_{15}^4} & -c_{13}^6 \\ 
-c_{14}^4 & \dfrac{(c_{14}^4)^2}{c_{15}^4} &-\dfrac{c_{14}^4 c_{15}^6-c_{15}^4 c_{23}^6}{c_{15}^4} \\ 
-c_{15}^4 &  c_{14}^4 & -c_{15}^6 \\ 
0 & 0 & -c_{23}^6
\end{pmatrix} .
\]
Then $E$ has rank $1$ if and only if  $E$ is not the zero matrix and all the $2\times 2$ minors of $E$ vanish.
Notice that  the  minor  $c_{23}^6 c_{15}^4$ is different from zero,  since we have already excluded both cases $c_{23}^6=0$ and $c_{15}^4=0$. Then $\mathfrak{g}$ cannot be isomorphic to $\mathfrak{g}_{27}$ and we obtain  a contradiction. This concludes the case $b_1\geq 4$ and the proof of the theorem.
\end{proof}

\begin{remark}
By Theorem \hyperref[half-flat mean convex]{B}, we notice that, on a $6$-dimensional nilpotent Lie algebra $\mathfrak{g}$ with $b_1(\mathfrak{g})=2$, whenever a mean convex half-flat $\text{SU}(3)$-structure exists, a double example can also be found (see Table \ref{table2}). This is not true for different values of the first Betti number.
\end{remark}

Under the hypothesis of exactness, we can prove the following

\begin{theorem}\label{coupledmeanconvex} 
Let $\mathfrak{g}$ be a $6$-dimensional nilpotent Lie algebra admitting 
an exact mean convex $ \normalfont \text{SL}(3,\mathbb{C})$-structure. Then $\mathfrak{g}$ is isomorphic to $\mathfrak{g}_{18}
$ or $\mathfrak{g}_{28}$. Moreover, up to a change of sign, every  exact definite $3$-form $\rho$ on $\mathfrak{g}_{18}$ and $\mathfrak{g}_{28}$  is mean convex, and $\mathfrak{g}_{28}$ is the only nilpotent Lie algebra admitting  mean convex  coupled  structures, up to isomorphism.
\end{theorem}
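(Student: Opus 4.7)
The plan is to leverage Theorem \ref{theoremA} as a starting point and add the exactness condition $\rho = d\eta$. Since every exact form is closed, the argument of Theorem \ref{theoremA} already restricts the candidate Lie algebras to $\mathfrak{g}_i$, $i \notin \{1,2,4,9,12,34\}$, from Table \ref{table1}. On each such algebra we parametrize exact $3$-forms by writing a generic $2$-form $\eta = \sum_{i<j} q_{ij}\,e^{ij}$ and setting $\rho := d\eta$, so that $\rho$ is automatically closed.

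The first step is to impose the stability condition $\lambda(\rho) < 0$ and study the resulting system in the variables $q_{ij}$. The key simplification compared to Theorem \ref{theoremA} is that $\rho$ is constrained to lie in $d\Lambda^2\mathfrak{g}^*$, which on a nilpotent Lie algebra is a proper and typically low-dimensional subspace of $\ker d$; thus the free parameters are far fewer than for a generic closed $3$-form. For each remaining Lie algebra one proceeds exactly as in the proofs of Theorems \ref{theoremA} and \ref{half-flat mean convex}: if $\lambda(d\eta)\geq 0$ identically, then no exact definite $3$-form exists; otherwise one extracts a basis of $(1,0)$-forms with respect to $J_\rho$ from the generators $\{e^k - iJ_\rho e^k\}$, computes the Hermitian matrix $(\beta_{m\bar n})$ via \ref{exprbeta}, and checks whether the semi-positivity criterion \ref{criterio} is compatible with the stability inequality. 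This case-by-case analysis is expected to eliminate every algebra in the list except $\mathfrak{g}_{18}$ and $\mathfrak{g}_{28}$.

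For these two surviving algebras one writes down the full space of exact definite $3$-forms and verifies by direct computation that $(\beta_{m\bar n})$ is positive semi-definite for every such $\rho$, possibly after a sign change: recall that $J_\rho = J_{-\rho}$ while $\hat\rho$, and hence $d\hat\rho$, flips sign, so swapping $\rho \to -\rho$ turns a negative semi-definite matrix into a positive semi-definite one. This yields the first two assertions. For the final statement, note that the coupled condition $d\omega = -\tfrac{3}{2}\nu_0\rho$ with $\nu_0\neq 0$ forces $\rho$ to be exact, so any coupled mean convex structure must live on $\mathfrak{g}_{18}$ or $\mathfrak{g}_{28}$. One then exhibits an explicit coupled mean convex $\text{SU}(3)$-structure on $\mathfrak{g}_{28}$, and rules out $\mathfrak{g}_{18}$ by showing that no positive $(1,1)$-form $\omega$ satisfying the volume normalization $\omega^3 = \tfrac{3}{2}\rho\wedge\hat\rho$ can have $d\omega$ proportional to $\rho$; this obstruction is extracted directly from the structure equations of $\mathfrak{g}_{18}$.

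The main obstacle is the systematic case-by-case elimination in the first step, which covers close to two dozen Lie algebras. Although the individual computations are routine, organizing them to exploit the reduced parameter space coming from exactness is essential, and in several cases $\lambda(d\eta)$ turns out to be either a non-negative polynomial or sits on a locus where the matrix $(\beta_{m\bar n})$ always fails to be semi-definite. The exclusion of $\mathfrak{g}_{18}$ in the coupled part is the second delicate point: it amounts to showing that the overdetermined algebraic system characterising compatible coupled structures has no solution, which requires a careful analysis rather than a mere dimension count.
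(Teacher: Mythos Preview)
Your overall strategy is sound and would succeed, but the paper organizes the argument more economically in two places.

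First, for the case-by-case elimination you propose to run over all $\sim 28$ Lie algebras surviving Theorem~A. The paper instead splits these according to whether they admit half-flat structures. For those that do, it invokes \cite[Theorem 4.1]{FinoRaffero}, whose proof already identifies $\mathfrak{g}_4$, $\mathfrak{g}_9$, $\mathfrak{g}_{28}$ as the only half-flat nilpotent algebras carrying an exact definite $3$-form; Theorem~A then discards $\mathfrak{g}_4$ and $\mathfrak{g}_9$. This leaves only the eight non-half-flat algebras $\mathfrak{g}_i$, $i=3,5,17,18,19,20,23,26$, to treat by your direct method. On these the paper finds $\lambda(d\eta)\equiv 0$ for $i=3,17,19,23,26$, $\lambda(d\eta)=p_{56}^4\geq 0$ for $i=5,20$, and $\lambda(d\eta)=-4p_{56}^4$ for $i=18$, so only $\mathfrak{g}_{18}$ survives. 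Your approach reaches the same endpoint but with roughly three times the casework.

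Second, your exclusion of $\mathfrak{g}_{18}$ from the coupled classification via an overdetermined system for $\omega$ is unnecessary. A coupled structure satisfies $d\omega=-\tfrac{3}{2}\nu_0\rho$, hence $d\omega^2=-3\nu_0\,\omega\wedge\rho=0$ by the $\text{SU}(3)$ compatibility $\omega\wedge\rho=0$; thus every coupled structure is half-flat. Since $\mathfrak{g}_{18}$ admits no half-flat structure at all (Table~\ref{table1}, \cite{Conti}), it cannot carry a coupled one. The paper uses exactly this observation, so the ``careful analysis'' you anticipate is replaced by a one-line remark.
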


\begin{proof}
Among the $6$-dimensional nilpotent Lie algebras admitting half-flat structures, as shown in the proof of \cite[Theorem 4.1]{FinoRaffero}, the only Lie algebras that can admit exact $\text{SL}(3,\mathbb{C})$-structures are isomorphic to 
$\mathfrak{g}_4$,
$\mathfrak{g}_9$ or $\mathfrak{g}_{28}$. 
Therefore, by Theorem \hyperref[theoremA]{A}, $\mathfrak{g}_{28}$ is the only nilpotent Lie algebra among them which can admit a mean convex structure. 
In particular, a coupled  mean convex structure on $\mathfrak{g}_{28}$ is given in Table \ref{table2}.
This example was first found in \cite{FinoRaffero}, up to a change of sign of the definite $3$-form.
For the remaining nilpotent Lie algebras $\mathfrak{g}_i$, for $i=3,5,17,18,19,20,23,26,$ which can admit mean convex $\text{SL}(3,\mathbb{C})$-structures by Theorem \hyperref[theoremA]{A}, we prove that $\mathfrak{g}_{18}$ is the only one that admits exact definite $3$-forms. To see this, let $(e^j)$ be the basis of $\mathfrak{g}_{i}^*$ as listed in Table \ref{table1}. Then  the generic  exact $3$-form $\rho$ on $\frak g_i$  is given by $d \eta$, where 
\begin{equation}\label{2-form}
\eta=\sum_{i<j}p_{ij}e^{ij}, \quad p_{ij}\in \mathbb{R}.
\end{equation}
By an explicit computation, one can show that, on $\mathfrak{g}_i$, for $i=3,17,19,23,26$,  $\lambda(\rho)=0$, while, on $\mathfrak{g}_{5}$ and $\mathfrak{g}_{20}$, $\lambda(\rho)=p_{56}^4>0$. Finally, on $\mathfrak{g}_{18}$, $\lambda(\rho)=-4 p_{56}^4$. Then, if $p_{56}\neq 0$, $\rho=d\eta$ is a definite $3$-form on $\mathfrak{g}_{18}$.
Moreover, $(e^1-iJ_{\rho}e^1, e^3-iJ_{\rho}e^3, e^5-iJ_{\rho}e^5)$ is a basis for $\Lambda^{1,1}\mathfrak{g}_{18}^*$ and, with respect to this basis, the matrix $(\beta_{m\overline{n}})$ associated to the $(2,2)$-form $d \hat{\rho}$ is
$\text{diag}(0,0,-4p_{56})$. Then, when $p_{56}<0$, $\rho$ is mean convex, otherwise $-\rho$ is.  
By a direct computation one can check that the same conclusions hold also for $\mathfrak{g}_{28}$. In particular, the generic  exact $3$-form $\rho=d\eta$, with $\eta$ as in \ref{2-form}, is definite as long as $p_{56}\neq0$. Moreover, $(e^1-iJ_{\rho}e^1, e^3-iJ_{\rho}e^3, e^5-iJ_{\rho}e^5)$ is a basis  of $\Lambda^{1,1}\mathfrak{g}_{28}^*$,  for every exact definite $\rho$  and, with respect to this basis, the matrix $(\beta_{m\overline{n}})$ associated to the $(2,2)$-form $d \hat{\rho}$ is
$\text{diag}(0,0,-4p_{56})$.
\end{proof}

\section{Hitchin flow equations} \label{Section6}

In this section we study the mean convex property in relation to the Hitchin flow equations \ref{HitchinFlow}.
We recall that  the solution $(\omega(t), \rho(t))$ of \ref{HitchinFlow}  starting from a half-flat structure remains half-flat as long as it exists.  However, the same does not happen in general for special classes of half-flat structures. Then, a natural question  is  whether the Hitchin flow equations preserve the mean convexity of the initial data $(\omega(0), \rho(0))$.
A first example of solution  preserving the mean convex condition of the initial data, up to change of sign of $\rho(0)$,  was found in \cite[Proposition 5.4]{FinoRaffero2}. In this case the   initial structure is coupled.

More generally, when the Hitchin flow solution $(\omega(t),\rho(t))$ preserves the coupled condition of the initial data,
then $\rho(t)=f(t)\rho(0)$, where $f\colon I\to \mathbb{R}$ is a non-zero smooth function with $f(0)=1$ (for more details see \cite[Proposition 5.2]{FinoRaffero2}). Then, a coupled solution preserves the mean convexity of the initial data as long as it exists.

Some further remarks can be made in other special cases.  
If $(\omega(t),\rho(t))$ is a solution of \ref{HitchinFlow}  starting from a strictly mean convex half-flat structure $(\omega,\rho)$, by continuity the solution remains mean convex, at least for small times.
This occurs, for instance, for double structures.
In particular cases, the mean convex property of the double initial data is preserved for all times:

\begin{proposition}
Let $M$ be a connected $6$-manifold endowed with a double structure $(\omega,\rho)$.
If $(\omega(t),\rho(t))$ is a double solution of \ref{HitchinFlow} defined on some $I\subseteq \mathbb{R}$, $0\in I$,  i.e. $d\hat{\rho}(t)=\nu_0(t)\omega^2(t)$ for each $t\in I$ for some  smooth nowhere vanishing function $\nu_0\colon I\to \mathbb{R} $, then there exists a nowhere vanishing smooth function $f:I\to \mathbb{R}$ such that $\omega(t)=f(t)\omega(0)$. 
Conversely, if  $(\omega(t),\rho(t))$ is a solution of \ref{HitchinFlow} with $\omega(t)=f(t)\omega(0)$, then it is a double solution. 
\end{proposition}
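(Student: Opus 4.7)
The plan is to rewrite the second Hitchin flow equation as a linear first–order ODE for $\omega(t)$ at each point of $M$, and then solve it explicitly. The key identity is obtained by applying the Leibniz rule to $\omega(t)\wedge \omega(t)$: the second equation of \ref{HitchinFlow} becomes
$$2\,\omega(t)\wedge \frac{\partial}{\partial t}\omega(t) = -d\hat\rho(t).$$
Now plugging in the double assumption $d\hat\rho(t)=\nu_0(t)\,\omega(t)^2$ yields
$$\omega(t)\wedge\left(\frac{\partial}{\partial t}\omega(t)+\frac{\nu_0(t)}{2}\omega(t)\right)=0$$
pointwise on $M$, for every $t\in I$.

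The next step is to use non-degeneracy of $\omega(t)$ to cancel the outer factor. Since $\omega(t)$ is non-degenerate, at every point $p\in M$ the Lefschetz operator $L_{\omega(t)}\colon \Lambda^2 T^*_pM \to \Lambda^4 T^*_pM$, $\alpha\mapsto \omega(t)\wedge \alpha$, is an isomorphism (the standard fact that, for a non-degenerate $2$-form on a $2n$-dimensional vector space, $L^{n-k}\colon \Lambda^k\to \Lambda^{2n-k}$ is an isomorphism for $k\leq n$, applied here to $n=3$, $k=2$). Consequently $L_{\omega(t)}$ has trivial kernel, and the identity above forces
$$\frac{\partial}{\partial t}\omega(t)=-\frac{\nu_0(t)}{2}\,\omega(t).$$
This is a linear first–order ODE in $t$ for the $2$-form $\omega(t)$ at each point $p$, and its unique solution with initial datum $\omega(0)_p$ is
$$\omega(t)_p=f(t)\,\omega(0)_p,\qquad f(t)\coloneqq \exp\!\left(-\tfrac{1}{2}\int_{0}^{t}\nu_0(s)\,ds\right).$$
Since $\nu_0$ is smooth on $I$, $f$ is smooth; being an exponential, it is nowhere vanishing. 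This proves the first assertion.

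For the converse, suppose $(\omega(t),\rho(t))$ is a solution of \ref{HitchinFlow} with $\omega(t)=f(t)\omega(0)$ for some smooth nowhere vanishing $f\colon I\to\mathbb{R}$. Then $\omega(t)^2=f(t)^2\omega(0)^2$, so differentiating gives $\frac{\partial}{\partial t}\omega(t)^2=2f'(t)f(t)\,\omega(0)^2=\frac{2f'(t)}{f(t)}\,\omega(t)^2$. Substituting in the second Hitchin flow equation yields
$$d\hat\rho(t)=-\frac{2f'(t)}{f(t)}\,\omega(t)^2,$$
which is precisely the double condition with $\nu_0(t)=-2f'(t)/f(t)$. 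No significant obstacle is expected: the only delicate point is invoking pointwise non-degeneracy of $\omega(t)$ to pass from the wedged identity to the ODE, but this is the standard Lefschetz isomorphism in dimension six.
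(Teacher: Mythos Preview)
Your argument is essentially identical to the paper's: use the second Hitchin flow equation together with the double hypothesis, cancel $\omega(t)$ via the Lefschetz isomorphism (the paper phrases this as ``the wedge product with $\omega(t)$ is injective on $2$-forms''), solve the resulting linear ODE, and run the computation backwards for the converse.

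One bookkeeping slip: in the paper's convention the second Hitchin flow equation reads $(\partial_t\omega(t))\wedge\omega(t)=-d\hat\rho(t)$, not $\partial_t(\omega(t)\wedge\omega(t))=-d\hat\rho(t)$; equivalently $d\hat\rho(t)=-\tfrac12\,\partial_t(\omega(t)^2)$. Your Leibniz step therefore introduces a spurious factor of $2$, so the correct ODE is $\partial_t\omega(t)=-\nu_0(t)\,\omega(t)$, giving $f(t)=\exp\bigl(-\int_0^t\nu_0(s)\,ds\bigr)$ and, in the converse, $\nu_0(t)=-f'(t)/f(t)$. This does not affect the statement of the proposition, which does not record these explicit formulas, but you should correct the constants.
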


\begin{proof}
Let $(\omega(t),\rho(t))$  be a solution with $\omega(t)=f(t)\omega(0)$.
From \ref{HitchinFlow} one gets
\[
d\hat{\rho}(t)=-\frac{1}{2}\frac{\partial}{\partial t}\left(\omega(t)^2\right) =-\frac{1}{2}\frac{\partial}{\partial t}\left( f^2(t)\omega(0)\wedge \omega(0)\right)=-f(t)\dot{f}(t)\omega(0)^2.
\]
Then $\omega(t)=f(t)\omega(0)$ is a double solution with $\nu_0(t)=- \frac{d}{d t} \ln f(t)$.
Conversely, if $d\hat{\rho}(t)=\nu_0(t)\omega(t)^2$,
then 
\[
\frac{\partial}{\partial t} \omega(t)\wedge \omega(t)=-d\hat{\rho}(t)=-\nu_0(t)\omega(t)^2.
\]
Since the wedge product with $\omega(t)$ is injective on $2$-forms, this is equivalent to $\frac{\partial}{\partial t}\omega(t)=-\nu_0(t)\omega(t)$, whose unique solution is $\omega(t)=f(t)\omega(0)$, with $f(t)=e^{-\int_0^t \nu_0(s)ds}$.
\end{proof}

We now provide an explicit example of double solution to \ref{HitchinFlow} and show that a double solution with double initial data may not exist.

\begin{example}
Consider the double $\text{SU}(3)$-structure $(\omega,\rho)$ given in Table \ref{table2} on $\mathfrak{g}_{24}$.
The solution of the Hitchin flow equations with initial data  $(\omega,\rho)$ is double and it is explicitly given by
\begin{align*}
\omega(t)&=\left( 1-\frac{5}{2}t \right)^{\frac{1}{5}}\omega, \\
\rho(t)&=-\left( 1-\frac{5}{2}t \right)^{\frac{6}{5}}e^{123}+e^{145}+e^{246}+e^{356}.
\end{align*}
In particular  $d\hat{\rho}(t)=\nu_0(t)\omega^2(t)$ with   $\nu_0(t) = (2-5t)^{-1}>0$  for each $t$ in the maximal interval of definition $I=(-\infty,\frac{2}{5})$.
Consider now the double $\text{SU}(3)$-structure $(\omega,\rho)$ given in Table \ref{table2} on $\mathfrak{g}_{6}$.
The solution of the Hitchin flow equation with initial data  $(\omega,\rho)$ is given by
\begin{align*}
\omega(t)&=f_1(t)\left( e^{15}-e^{24} \right)-f_2(t)e^{36}, \\
\rho(t)&=h_1(t)e^{123}+\left(h_2(t)-1\right)e^{134}-e^{146}-e^{235}+e^{256}-e^{345}+h_2(t)e^{126},
\end{align*}
where $f_1(t),f_2(t),h_1(t),h_2(t)$ satisfy the following  autonomous \textsc{ode} system:
\[
\begin{cases}
\dot{f_1} =\frac{1}{2f_1^3f_2}\left( 2h_2-1 \right), \\[2pt]
\dot{f_2} =-\frac{1}{2f_1^4f_2}\left( 2f_1+f_2\left(2h_2-1\right)\right), \\[2pt]
\dot{h_1}=-2f_1,\\[2pt]
\dot{h_2}=-f_2,
\end{cases}
\]
with initial conditions $f_1(0)=f_2(0)=h_1(0)=1$, $h_2(0)=0$,
which, by known theorems, admits a unique solution with given initial data. 
In particular,  this solution is not a double solution.
A direct computation shows that the eigenvalues $\lambda_i(t) $ of the matrix $(\beta_{m\overline{n}}(t))$ associated to $d\hat{\rho}(t)$ are
\[
\lambda_1=\lambda_2=\sqrt{-h_2^2+h_1+h_2}, \quad
\lambda_3=(1-2h_2)\sqrt{-h_2^2+h_1+h_2}.
\]
In particular the mean convex property is preserved for small times as expected.
\end{example}

To our knowledge, the question of whether the Hitchin flow preserves the mean convexity of the initial data when the $(2,2)$-form is not positive but just semi-positive is still open.
Nonetheless, some easy considerations can be made in order to obtain a better understanding of the problem.
Let $M$ be a compact real analytic $6$-dimensional manifold endowed with a half-flat mean convex $\text{SU}(3)$-structure $(\omega,\rho)$.
Since the unique solution of
\ref{HitchinFlow} starting from $(\omega,\rho)$ is a  one-parameter family of half-flat structures $(\omega(t),\rho(t))$, we can write
\[
d\hat{\rho}(t)=(\nu_0(t)\omega(t)-\nu_2(t))\wedge\omega(t),
\]
where $\nu_0(t)\in C^{\infty}(M)$ and $\nu_2(t)\in \Lambda^{1,1}_0M$ is a primitive $(1,1)$-form with respect to $J_{\rho(t)}$ for each $t\in I$, where $I$ is the maximal interval of definition of the flow. 
Then $d\hat{\rho}(t)\wedge\omega(t)=\nu_0(t)\omega(t)^3$
and, since $\nu_0(0)>0$ by the mean convexity of the initial data, by continuity we have $\nu_0(t)>0$ at least for small times.
By \ref{HitchinFlow}, as long as $\nu_0(t)>0$, the volume form $\omega(t)^3$ is pointwise decreasing:
\[\frac{\partial}{\partial t} (\omega(t)^3)=\frac{\partial}{\partial t}(\omega(t)^2)\wedge\omega(t)+
\frac{\partial}{\partial t}\omega(t)\wedge \omega(t)^2=-3d\hat{\rho}(t)\wedge\omega(t)=-3\nu_0(t)\omega(t)^3.
\]
Moreover, $\omega(t)^2$ is a positive $(2,2)$-form with respect to $J_{\rho(t)}$ for all $t\in I$ and, from the second equation in \ref{HitchinFlow}, we know that $-\partial_t(\omega^2(t))$ remains a $(2,2)$-form with respect to $J_{\rho(t)}$ for each $t\in I$ such that $-\partial_t(\omega^2(t))\big|_{t=0}=2 d\hat{\rho}(0)$ is semi-positive.
Then the Hitchin flow solution preserves the mean convexity of the initial data if and only if $-\partial_t(\omega^2(t))=2 d\hat{\rho}(t)$ remains semi-positive. 
The essential difficulty in this problem lies in the fact that the link between the positivity of $\omega^2(t)$ and the mean convexity of the initial data is not sufficient to ensure the mean convexity of the solution since also the almost complex structure evolves in a non-linear way under the equation $\partial_t(\rho(t))=d\omega(t)$.
Let us look at the behaviour of \ref{HitchinFlow} on a specific example.

\begin{example}
Consider the  mean convex half-flat structure $(\omega,\rho)$ given in Table \ref{table2} on $\mathfrak{g}_{25}$ and consider the family of solutions to the second equation in \ref{HitchinFlow}, starting from $(\omega,\rho)$:
\begin{align*}
\omega(t)&=-a_1(t)e^{13}+\frac{1}{a_2(t)}e^{45}+a_2(t)e^{26}, \\
\rho(t)&=e^{156}+b_1(t)e^{124}-e^{235}-e^{346}+b_2(t)(e^{125}-e^{234}),
\end{align*}
where $a_1(t),a_2(t),b_1(t),b_2(t)$ satisfy the following \textsc{ode} system:
\begin{equation}\label{SecondEquation}
\begin{cases}
\dot{a_1}=-\frac{1}{2a_1 a_2}\left(2a_2^2b_2+1\right), \\
\dot{a_2}=\frac{1}{2a_1^2}\left(2a_2^2b_2-1\right), \\
\end{cases}
\end{equation}
subject to the normalization condition $\sqrt{b_1-b_2^2}=a_1$, with initial data $a_1(0)=a_2(0)=b_1(0)=1$,
 $b_2(0)=0$.
This system defines a family of solutions to $\frac{1}{2}\partial_t(\omega(t)^2)=-d\hat{\rho}(t)$ depending on $b_2(t)$. Then, if $b_2(t)=a_1(t)-1$, for instance, $d\hat{\rho}(t)$ is not semi-positive, at least for small times $t>0$.  
Anyway, the unique solution to \ref{HitchinFlow}  starting from $(\omega,\rho)$, given by \ref{SecondEquation} together with 
\[
\begin{cases}
\dot{b_1}=-\frac{1}{a_2}, \\
\dot{b_2}=a_2,
\end{cases}
\]
preserves the mean convexity of the initial data.
\end{example}
By a direct computation, one can show that the mean convexity of the initial data is preserved by \ref{HitchinFlow}, for small times, also in all the other examples of half-flat mean convex structures given in Table \ref{table2}.

\section{Proof of Theorem C}\label{Section7}

We recall that a symplectic form $\Omega$ is said to \emph{tame} an almost complex structure $J$ if its $(1,1)$-part $\Omega^{1,1}$ is positive.
A closed $\text{SL}(3,\mathbb{C})$-structure $\rho$ is then called   \emph{tamed} if there exists a symplectic form $\Omega$ taming  the induced almost complex structure $J_{\rho}$.
  As  already observed in \cite{Donaldson},   compact $6$-manifolds cannot  admit   tamed mean convex $\text{SL}(3,\mathbb{C})$-structures. 

Notice that, if we denote as usual  $\hat{\rho}= J_\rho \rho$,  when the normalization condition $\rho\wedge\hat{\rho}=\frac{2}{3}\omega^3$ is satisfied and $d \omega =0$, then  the pair $(\omega,\rho)$ defines  a symplectic half-flat structure.

Since we consider invariant   tamed closed $\text{SL}(3,\mathbb{C})$-structures on solvmanifolds, we can work as in the previous sections at the level of solvable unimodular   Lie algebras.

\begin{proof}[Proof of Theorem C]  First we prove the theorem in the nilpotent case. $6$-dimensional symplectic nilpotent  Lie algebras   were classified in \cite{Goze} (see also \cite{salamon}) and their structure equations 
 are listed in Table \ref{table1}.
For any such Lie algebra we consider a pair $(\rho,\Omega)\in \Lambda^3\mathfrak{g}_i^*\times \Lambda^2\mathfrak{g}_i^*$ explicitly given by
\[
\rho=\sum_{i<j<k}p_{ijk}\, e^{ijk}, \quad \Omega=\sum_{r<s}h_{rs}\, e^{rs}, 
\]
where $p_{ijk}, h_{rs}\in \mathbb{R}$, and impose the two conditions $d\rho=0$  and  $d\Omega=0$, which are both linear in the coefficients $p_{ijk}, h_{rs}$. Then $\Omega$ is a symplectic form provided that  it is non-degenerate, i.e. $\Omega^3\neq 0$. 
By  \cite[Lemma 3.1]{EnriettiFinoVezzoni}, 
a real Lie algebra $\mathfrak{g}$ endowed with an almost complex structure $J$ such that $J\xi(\mathfrak{g}) \cap [\mathfrak{g},\mathfrak{g}] \neq \{0\} $, $\xi(\mathfrak{g})$ being the center of $\mathfrak{g}$, cannot admit a symplectic form $\Omega$  taming $J$. If we assume $\lambda(\rho)<0$, we may then apply this result on each $\mathfrak{g}_i$ by considering the almost complex structure $J_{\rho}$ induced by $\rho$.
We notice that, for any $\mathfrak{g}_i$ listed in Table \ref{table1}, $e_6\in \xi(\mathfrak{g}_i)$. 
A direct computation on each $\mathfrak{g}_i$ for $i=3,4,5,6,7,8,9,10,13,18,19,20,28,29,30$, shows that $J_{\rho}e_6\in [\mathfrak{g}_i,\mathfrak{g}_i]$,  for any $J_{\rho}$ induced by a closed  $3$-form $\rho$.
On $\mathfrak{g}_i$, for $i=23,26,33$, the same obstruction holds since an explicit computation shows that the map 
\[
\pi\circ J_{\rho}: \xi(\mathfrak{g}_i) \to \mathfrak{g}_i, 
\]
has non-trivial kernel, where $\pi$ denotes the projection onto  $\mathfrak{g}_i/[\mathfrak{g}_i,\mathfrak{g}_i]$. This means that, for each $\rho$, one can find a non-zero element in the center of $\mathfrak{g}_i$ whose image under $J_{\rho}$ lies entirely in $[\mathfrak{g}_i,\mathfrak{g}_i]$.
For all the other cases, let $\Omega=\Omega^{1,1}+\Omega^{2,0}+\Omega^{0,2}$ be the decomposition of $\Omega$ in types with respect to $J_{\rho}$, and denote by $\omega$ the $(1,1)$-form $\Omega^{1,1} \coloneqq \frac{1}{2}\left(\Omega+J_{\rho}\Omega\right)$. Then, in order to have a closed $\text{SL}(3,\mathbb{C})$-structure tamed by $\Omega$ we have to require that $\omega$ is positive, i.e., that the symmetric $2$-tensor $g\coloneqq\omega(\cdot,J_{\rho}\cdot)$ is positive definite. Denote by $g_{ij}\coloneqq g(e_i,e_j)$ the coefficients of $g$ with respect the dual basis $\left(e_1,\ldots,e_6\right)$ of $\mathfrak{g}$.
Then, a direct computation on $\mathfrak{g}_i$, for $i=11,12,21,22,27$, shows that $g_{66}$ always vanishes, so we may discard  these cases as well. 
We may then restrict our attention to the remaining Lie algebras $\mathfrak{g}_{24}$ and $\mathfrak{g}_{31}$.
Since, as shown in \cite[Theorem 2.4]{ContiTomassini}, these are the only $6$-dimensional non-abelian nilpotent Lie algebras  carrying a symplectic half-flat structure.
Explicit examples of closed  $\text{SL}(3,\mathbb{C})$-structures tamed  by a symplectic form $\Omega$ such that $d\Omega^{1,1}\neq 0$ are given by
\[\rho= -e^{125}-e^{146}-e^{156}-e^{236}-e^{245}-e^{345}-e^{356}, \quad \Omega= e^{13}+\frac{1}{2} e^{14}-\frac{1}{2} e^{24}+e^{26}+e^{35}+e^{36}, \]  on $\mathfrak{g}_{24}$, 
and by
\[
\rho= e^{123}+2e^{145}+e^{156}+e^{235}+e^{246}+e^{345}, \quad
\Omega=e^{16}-e^{25}-e^{34}+e^{36},
\] on $\mathfrak{g}_{31}$. This proves the first part of the theorem.

Using the classification results  in  \cite[Th.  2]{Macri} for  $6$-dimensional symplectic unimodular   (non-nilpotent) solvable Lie algebras, for each Lie algebra one can compute the metric coefficients $g_{ij}$  of $g$ with respect to the basis $(e_1,\ldots,e_6)$ for $\mathfrak{g}$ as listed in Table \ref{table3}.
It turns out that, if  $\mathfrak{g}$ is one among
$\mathfrak{g}_{6,3}^{0,-1}$, $\mathfrak{g}_{6,10}^{0,0}$, $\mathfrak{g}_{6,13}^{-1,\frac{1}{2},0}$, $\mathfrak{g}_{6,13}^{\frac{1}{2},-1,0}$, $\mathfrak{g}_{6,21}^0$, $\mathfrak{g}_{6,36}^{0,0}$, $\mathfrak{g}_{6,78}$, $A_{5,8}^{-1} \oplus \mathbb{R}$, $A_{5,13}^{-1,0,\gamma}$, $A_{5,14}^{0} \oplus \mathbb{R}$, $A_{5,15}^{-1} \oplus \mathbb{R}$, $A_{5,17}^{0,0,\gamma}\oplus\mathbb{R},$ $A_{5,18}^{0} \oplus \mathbb{R}$ or $A_{5,19}^{-1,2} \oplus \mathbb{R}$, each closed definite $3$-form $\rho$ induces a $J_{\rho}$ such that $g_{11}=0$. In a similar way,
if $\mathfrak{g}$ is $\mathfrak{g}_{6,15}^{-1}$ or $\mathfrak{g}_{6,18}^{-1,-1}$, then $g_{44}=0$, while when $\mathfrak{g}$ is $\mathfrak{n}_{6,84}^{\pm 1}$,  $\mathfrak{e}(2) \oplus \mathbb{R}^3$ or $\mathfrak{e}(1,1) \oplus \mathbb{R}^3$,  $g_{33}=0$. Finally, when $\mathfrak{g}=\mathfrak{e}(1,1)\oplus \mathfrak{h}$, then $g_{66}=0$.  
In some other cases $g$ cannot ever be positive definite since, for each closed $\rho$ inducing an almost complex structure $J_{\rho}$,  $g_{rr}=-g_{kk}$ for some $r\neq k$.
In particular, when $\mathfrak{g}=\mathfrak{g}_{6,70}^{0,0}$, then $g_{11}=-g_{22}$, when $\mathfrak{g}=\mathfrak{e}(2)\oplus \mathfrak{e}(2)$, then $g_{55}=-g_{66}$, and when $\mathfrak{g}$ is $\mathfrak{e}(2)\oplus\mathfrak{e}(1,1)$ or $\mathfrak{e}(2)\oplus\mathfrak{h}$, then $g_{22}=-g_{33}$. 
As shown in \cite[Prop.  3.1, 4.1 and 4.3]{SHFsolvmanifolds}, for the remaining Lie algebras
$\mathfrak{g}_{6,38}^0$, $\mathfrak{g}_{6,54}^{0,-1}$, $\mathfrak{g}_{6,118}^{0,-1,-1}$, $\mathfrak{e}(1,1) \oplus \mathfrak{e}(1,1)$, $A_{5,7}^{-1,\beta,-\beta}$, $A_{5,17}^{0,0,-1} \oplus \mathbb{R}$,  $A_{5,17}^{\alpha,-\alpha,1} \oplus \mathbb{R}$, as listed in Table \ref{table3},
a symplectic half-flat structure always exists. Moreover, on these Lie algebras, an explicit example of closed $ \text{SL}(3,\mathbb{C})$-structure tamed by a symplectic form $\Omega$ such that $d\Omega^{1,1}\neq 0 $ is given Table \ref{table3}.
\end{proof}

\begin{remark}
\begin{enumerate}

\item   By  \cite[Remarks 3.2 and 4.4]{SHFsolvmanifolds}, the solvable Lie groups corresponding to each solvable Lie algebra admitting closed tamed $\text{SL}(3,\mathbb{C})$-structures admit compact quotients by lattices (for further details see \cite{Bock, FernandezLeonSaralegui, TralleOprea, Yamada}).

\item  As shown in \cite{Donaldson}, given  an $\text{SL}(3,\mathbb{C})$-structure $\rho$ tamed by a $2$-form $\Omega$ on a real $6$-dimensional vector space $V$, the $3$-form  \[
\varphi=\rho+\Omega\wedge dt,
\]
defines a $\text{G}_2$- structure on  $V\oplus \mathbb{R}$.
Therefore, as an application of Theorem \hyperref[tamed]{C}, we classify decomposable solvable Lie algebras of the form  $\mathfrak{g} \oplus \mathbb{R}$ admitting a closed $\text{G}_2$-structure. In particular, in the nilpotent case, this result was already obtained in \cite{ContiFernandez}.  
\end{enumerate}
 \end{remark}

\medskip

\newpage

\section*{Appendix}

Table  \ref{table1}  contains the  isomorphism classes  of  $6$-dimensional real nilpotent Lie algebras  $\mathfrak{g}_i$, $i =1, \ldots, 34,$  including  their  first Betti numbers and an indication of whether they admit half-flat structures and symplectic forms.
In Table \ref{table2} we give  an explicit example of mean convex closed $\text{SU}(3)$-structure, indicating   which  ones are half-flat.  
Table \ref{table3} contains all $6$-dimensional  symplectic solvable (non-nilpotent) unimodular Lie algebras, specifying  which admit    tamed  closed $\text{SL}(3,\mathbb{C})$-structures. An explicit example of  a  closed tamed $\text{SL}(3,\mathbb{C})$-structure is  also included.

\begin{table}[H]
\caption{$6$-dimensional real nilpotent Lie algebras} 
\label{table1}
\smallskip
\centering
\renewcommand\arraystretch{1.25}
\scalebox{0.86}{
\begin{tabular}{|c|c|c|c|c|}
\hline
$\mathfrak{g}$ & {\normalfont Structure constants } &$b_1(\mathfrak{g})$ & Half-flat & Symplectic\\ \hline
$\mathfrak{g}_1$       &$(0,0,e^{12},e^{13}, e^{14}+e^{23}, e^{34}-e^{25})$& 2 & --& -- \\ \hline 
$\mathfrak{g}_2$       &$(0,0,e^{12},e^{13},e^{14},e^{34}-e^{25})$& 2& --& -- \\ \hline
$\mathfrak{g}_3$       &$(0,0,e^{12},e^{13},e^{14},e^{15})$& 2& -- & \cmark \\ \hline
$\mathfrak{g}_4$       &$(0,0,e^{12},e^{13},e^{14}+e^{23},e^{24}+e^{15})$& 2& \cmark & \cmark \\ \hline
$\mathfrak{g}_5$       &$(0,0,e^{12},e^{13},e^{14},e^{23}+e^{15})$& 2& -- & \cmark \\ \hline
$\mathfrak{g}_6$  &$(0,0,e^{12},e^{13},e^{23},e^{14})$& 2& \cmark  & \cmark \\ \hline
$\mathfrak{g}_7$  &$(0,0,e^{12},e^{13},e^{23},e^{14}-e^{25})$& 2& \cmark & \cmark \\ \hline
$\mathfrak{g}_8$  &$(0,0,e^{12},e^{13},e^{23},e^{14}+e^{25})$& 2& \cmark & \cmark \\ \hline
$\mathfrak{g}_9$  &$(0,0,0,e^{12},e^{14}-e^{23},e^{15}+e^{34})$& 3&\cmark  & \cmark \\ \hline
$\mathfrak{g}_{10}$  &$(0,0,0,e^{12},e^{14},e^{15}+e^{23})$& 3& \cmark  & \cmark \\ \hline
$\mathfrak{g}_{11}$  &$(0,0,0,e^{12},e^{14},e^{15}+e^{23}+e^{24})$& 3& \cmark &\cmark \\ \hline
$\mathfrak{g}_{12}$  &$(0,0,0,e^{12},e^{14},e^{15}+e^{24})$& 3& \cmark & \cmark \\ \hline
$\mathfrak{g}_{13}$  &$(0,0,0,e^{12},e^{14},e^{15})$& 3& \cmark & \cmark \\ \hline
$\mathfrak{g}_{14}$  &$(0,0,0,e^{12},e^{13},e^{14}+e^{35})$& 3& \cmark & -- \\ \hline
$\mathfrak{g}_{15}$  &$(0,0,0,e^{12},e^{23},e^{14}+e^{35})$& 3& \cmark & -- \\ \hline
$\mathfrak{g}_{16}$  &$(0,0,0,e^{12},e^{23},e^{14}-e^{35})$& 3& \cmark & -- \\ \hline
$\mathfrak{g}_{17}$  &$(0,0,0,e^{12},e^{14},e^{24})$& 3& -- & -- \\ \hline
$\mathfrak{g}_{18}$ & $(0,0,0,e^{12},e^{13}-e^{24},e^{14}+e^{23})$& 3  &-- & \cmark \\ \hline
$\mathfrak{g}_{19}$ & $(0,0,0,e^{12},e^{14},e^{13}-e^{24})$	& 3			& --&	\cmark	                 \\ \hline 
$\mathfrak{g}_{20}$ & $(0,0,0,e^{12},e^{13}+e^{14},e^{24})$ & 3 & -- & \cmark \\ \hline 
$\mathfrak{g}_{21}$ & $(0,0,0,e^{12},e^{13},e^{14}+e^{23})$& 3  & \cmark & \cmark \\ \hline 
$\mathfrak{g}_{22}$ & $(0,0,0,e^{12},e^{13},e^{24})$& 3 &\cmark & \cmark \\ \hline 
$\mathfrak{g}_{23}$ & $(0,0,0,e^{12},e^{13},e^{14})$ & 3&-- & \cmark\\ \hline
$\mathfrak{g}_{24}$ & $(0,0,0,e^{12},e^{13},e^{23})$ & 3& \cmark & \cmark \\ \hline 
$\mathfrak{g}_{25}$ & $(0,0,0,0,e^{12},e^{15}+e^{34})$ & 4 & \cmark & -- \\ \hline 
$\mathfrak{g}_{26}$ & $(0,0,0,0,e^{12},e^{15})$& 4 & -- & \cmark \\ \hline 
$\mathfrak{g}_{27}$  & $(0,0,0,0,e^{12},e^{14}+e^{25})$& 4& \cmark &\cmark  \\ \hline
$\mathfrak{g}_{28}$ & $(0,0,0,0,e^{13}-e^{24},e^{14}+e^{23})$& 4 & \cmark & \cmark \\ \hline 
$\mathfrak{g}_{29}$ & $(0,0,0,0,e^{12},e^{14}+e^{23})$ & 4& \cmark & \cmark \\ \hline
$\mathfrak{g}_{30}$ & $(0,0,0,0,e^{12},e^{34})$& 4 & \cmark & \cmark \\ \hline 
$\mathfrak{g}_{31}$ & $(0,0,0,0,e^{12},e^{13})$& 4 & \cmark &  \cmark \\ \hline 
$\mathfrak{g}_{32}$ & $(0,0,0,0,0,e^{12}+e^{34})$ & 5 &\cmark & -- \\ \hline 
$\mathfrak{g}_{33}$ & $(0,0,0,0,0,e^{12})$& 5 & \cmark & \cmark \\ \hline 
$\mathfrak{g}_{34}$  & $(0,0,0,0,0,0)$ & 6 & \cmark & \cmark \\ \hline
\end{tabular} }
\end{table}
\renewcommand\arraystretch{1}

\begin{table}[H] 
\caption{Explicit examples of mean convex closed $\text{SU}(3)$-structures}
\smallskip

\label{table2}
\centering
 \scalebox{0.70}{
\renewcommand\arraystretch{1.25}
\begin{tabular}{|c|c|c|}
\hline
$\mathfrak{g}$ & \normalfont{Mean convex closed $\text{SU}(3)$-structures }&  \normalfont{Half-flat mean convex example } \\ \hline 
$\mathfrak{g}_3$ & \begin{tabular}{@{}c@{}} 
 $ \omega=-e^{12}-e^{35}-e^{46}$ \\ $\rho=-\frac{5}{4}e^{136}+\frac{5}{4}e^{145}-e^{156}-e^{234}-e^{236}+e^{245}  $ \\
\end{tabular} & -- \\ \hline 
$\mathfrak{g}_5$ & \begin{tabular}{@{}c@{}} 
 $ \omega=-e^{12}-e^{35}-e^{46}$ \\ $\rho=\frac{1}{2}e^{134}-e^{156}-e^{236}+2e^{245} $\\
\end{tabular} & -- \\  \hline 
$\mathfrak{g}_6$  & \begin{tabular}{@{}c@{}} 
 $ \omega=e^{15}-e^{24}-e^{36}$ \\ $\rho=e^{123}-e^{134}-e^{146}-e^{235}-e^{256}-e^{345} $\\
\end{tabular} & \cmark\\ \hline
$\mathfrak{g}_7$   & \begin{tabular}{@{}c@{}} 
 $ \omega=-\frac{1}{2}e^{15}+\frac{1}{2}e^{24}-\frac{3}{2}e^{36}$ \\ $\rho=-\frac{3}{4}e^{123}+\frac{1}{3}e^{134}-e^{146}+\frac{1}{12}e^{235}-\frac{1}{4}e^{256}+\frac{3}{4}e^{345} $\\
\end{tabular}& \cmark \\ \hline
$\mathfrak{g}_8$  & \begin{tabular}{@{}c@{}} 
 $ \omega=e^{15}-e^{24}-\frac{1}{2}e^{36}$ \\ $\rho=e^{123}-e^{134}-\frac{1}{2}e^{146}-e^{235}-\frac{1}{2}e^{256}-e^{345} $\\
\end{tabular}& \cmark \\ \hline
$\mathfrak{g}_{10}$  & \begin{tabular}{@{}c@{}} 
 $ \omega=-\frac{1}{2}e^{13}+e^{46}-e^{25}$ \\ $\rho=e^{124}-e^{145}+e^{156}-\frac{1}{2}e^{234}-\frac{1}{2}e^{236}+\frac{1}{2}e^{345}   $\\
\end{tabular} & \cmark \\ \hline
$\mathfrak{g}_{11}$  &  \begin{tabular}{@{}c@{}} 
 $ \omega=\frac{5}{4}e^{13}+\frac{28}{3}e^{24}+e^{25}-\frac{82}{15}e^{26}+\frac{5}{4}e^{34}+e^{35}+e^{45}+\frac{14}{3}e^{46}+e^{56} $ \\ $\rho=2e^{125}+e^{126}-\frac{5}{4}e^{134}+e^{136}+e^{146}+e^{156}-e^{236}+e^{245}-e^{246}  $\\
 \end{tabular} & -- \\ \hline
$\mathfrak{g}_{13}$ & \begin{tabular}{@{}c@{}} 
 $ \omega=e^{13}+e^{46}+e^{25}$ \\ $\rho=-e^{124}+e^{145}+e^{156}+e^{234}-e^{236}-e^{345}   $\\
\end{tabular} & \cmark \\ \hline
$\mathfrak{g}_{14}$ & \begin{tabular}{@{}c@{}} 
 $ \omega=e^{13}-e^{26}+e^{45}$ \\ $\rho=-e^{125}-e^{146}+e^{234}+e^{356}  $\\
\end{tabular} & -- \\ \hline
$\mathfrak{g}_{15}$&  \begin{tabular}{@{}c@{}} 
 $ \omega=e^{15}+e^{34}-e^{26}$ \\ $\rho=e^{123}+e^{136}-e^{146}+e^{235}-e^{245}+e^{356}   $\\
\end{tabular} & \cmark \\ \hline
$\mathfrak{g}_{16}$   & \begin{tabular}{@{}c@{}} 
 $ \omega=e^{13}+e^{26}-e^{45}$ \\ $\rho=2e^{124}-\frac{\sqrt{2}}{2}e^{156}-e^{235}+\frac{\sqrt{2}}{2}e^{346}  $\\
\end{tabular} & \cmark \\ \hline
$\mathfrak{g}_{17}$  & \begin{tabular}{@{}c@{}} 
 $ \omega=e^{12}+e^{34}+e^{56}$ \\ $\rho=-e^{135}+2e^{146}+e^{236}+\frac{1}{2}e^{245}  $ \\
\end{tabular} & -- \\ \hline
 $\mathfrak{g}_{18}$ & \begin{tabular}{@{}c@{}} 
 $ \omega=e^{12}-e^{34}-e^{56}$ \\ $\rho=e^{135}-\frac{\sqrt{5}}{2} e^{146}+\frac{\sqrt{5}}{2}e^{236}+e^{245}+e^{246}  $ \\
\end{tabular} & -- \\ \hline
  $\mathfrak{g}_{19}$	& \begin{tabular}{@{}c@{}} 
 $ \omega=-e^{12}+e^{34}-e^{56}$ \\ $\rho=e^{135}+e^{146}-e^{236}+e^{245} $ \\
\end{tabular} & -- \\ \hline  
 $\mathfrak{g}_{20}$  & \begin{tabular}{@{}c@{}} 
 $ \omega=-e^{12}-e^{34}+e^{56}$ \\ $\rho=-e^{135}-e^{146}+e^{235}-e^{236}+e^{245}+e^{246} $ \\
\end{tabular} & -- \\ \hline   
$\mathfrak{g}_{21}$  & \begin{tabular}{@{}c@{}} 
 $ \omega=-e^{12}-e^{34}+e^{56}$ \\ $\rho=-2e^{136}+e^{145}+\frac{1}{2}e^{235}+e^{246} $\\
\end{tabular} & -- \\ \hline
$\mathfrak{g}_{22}$ & \begin{tabular}{@{}c@{}} 
 $ \omega=e^{16}+e^{23}+e^{45}$ \\ $\rho=e^{124}-e^{135}-e^{256}-e^{346} $ \\
\end{tabular} & \cmark \\ \hline
$\mathfrak{g}_{23}$ & \begin{tabular}{@{}c@{}}
 $ \omega=e^{12}+e^{34}+e^{56}$ \\ $\rho=2e^{136}+\frac{1}{2}e^{145}+e^{235}-e^{246} $ \\
\end{tabular} & -- \\ \hline  
 $\mathfrak{g}_{24}$  & \begin{tabular}{@{}c@{}} 
 $ \omega=-e^{16}+e^{25}-e^{34}$ \\ $\rho=-e^{123}+e^{145}+e^{246}+e^{356} $ \\
\end{tabular} & \cmark\\ \hline   
$\mathfrak{g}_{25}$  &  \begin{tabular}{@{}c@{}} 
 $ \omega=-e^{13}+e^{45}+e^{26}$ \\ $\rho=e^{156}+e^{124}-e^{235}-e^{346} $ \\
\end{tabular} & \cmark \\ \hline  
 $\mathfrak{g}_{26}$ & \begin{tabular}{@{}c@{}} 
 $ \omega=e^{16}+e^{23}-e^{36}+e^{45}$ \\ $\rho=-2e^{124}+e^{135}+e^{146}-e^{234}+e^{256} $ \\
\end{tabular} & -- \\ \hline    
 $\mathfrak{g}_{27}$  &  \begin{tabular}{@{}c@{}} 
 $ \omega=-\frac{\sqrt{3}}{2}e^{12}-e^{45}+e^{36}$ \\ $\rho=e^{135}+e^{146}+e^{234}+e^{235}-e^{256} $ \\
\end{tabular} & -- \\ \hline  
 $\mathfrak{g}_{28}$ & \begin{tabular}{@{}c@{}} 
 $ \omega=-e^{12}-e^{34}+e^{56}$ \\ $\rho=-e^{136}+e^{145}+e^{235}+e^{246} $ \\
\end{tabular} & \cmark \\ \hline     
$\mathfrak{g}_{29}$ & \begin{tabular}{@{}c@{}} 
 $ \omega=e^{13}+e^{24}-e^{56}$ \\ $\rho=e^{126}-e^{145}+e^{235}-e^{346} $ \\
\end{tabular} & \cmark\\ \hline     
 $\mathfrak{g}_{30}$ & \begin{tabular}{@{}c@{}} 
 $ \omega=e^{13}-e^{24}+e^{56}$ \\ $\rho=e^{125}-e^{126}+e^{145}+e^{146}+e^{236}+e^{345} $ \\
\end{tabular} & \cmark\\ \hline
$\mathfrak{g}_{31}$ 
& \begin{tabular}{@{}c@{}} 
 $ \omega=-e^{14}-e^{35}+e^{26}$ \\ $\rho=-e^{123}+e^{156}-e^{245}-e^{346} $ \\
\end{tabular}& \cmark \\ \hline             
 $\mathfrak{g}_{32}$ & \begin{tabular}{@{}c@{}} 
 $ \omega=-\sqrt{2} e^{13}-e^{24}-e^{56}$ \\ $\rho=-e^{125}+e^{146}-e^{236}+2e^{345}$ \\
\end{tabular}& \cmark \\ \hline        
$\mathfrak{g}_{33}$  & \begin{tabular}{@{}c@{}} 
 $ \omega=-e^{13}-e^{24}-e^{56}$ \\ $\rho=-e^{125}+e^{146}-e^{236}+e^{345} $ \\
\end{tabular} & \cmark\\ \hline        
\end{tabular} 
}
\end{table}
\renewcommand\arraystretch{1}

\begin{table}[H]
\begin{center}
\addtolength{\leftskip} {-3cm}
\addtolength{\rightskip}{-3cm}
\caption{$6$-dimensional unimodular  symplectic non-nilpotent solvable Lie algebras}
\label{table3}
\smallskip
\scalebox{0.75}{
\renewcommand\arraystretch{1.25}
\begin{tabular}{|c|c|c|}
\hline
$\mathfrak{g}$ & {\normalfont Structure constants} & Tamed closed $\text{SL}(3,\mathbb{C})$-structure \\ \hline
$\mathfrak{g}_{6,3}^{0,-1}$ &$(e^{26},e^{36},0,e^{46},-e^{56},0)$ & -- \\ \hline 
$\mathfrak{g}_{6,10}^{0,0}$ &$(e^{26},e^{36},0,e^{56},-e^{46},0)$& -- \\ \hline
$\mathfrak{g}_{6,13}^{-1,\frac{1}{2},0}$ &$(-\frac{1}{2}e^{16}+e^{23},-e^{26},\frac{1}{2}e^{36},e^{46},0,0)$& -- \\ \hline
$\mathfrak{g}_{6,13}^{\frac{1}{2},-1,0}$ &$(-\frac{1}{2}e^{16}+e^{23},\frac{1}{2}e^{26},-e^{36},e^{46},0,0)$& -- \\ \hline
$\mathfrak{g}_{6,15}^{-1}$ &$(e^{23},e^{26},-e^{36},e^{26}+e^{46},e^{36}-e^{56},0)$& -- \\ \hline
$\mathfrak{g}_{6,18}^{-1,-1}$  &$(e^{23},-e^{26},e^{36},e^{36}+e^{46},-e^{56},0)$& -- \\ \hline
$\mathfrak{g}_{6,21}^0$ & $(e^{23},0,e^{26},e^{46},-e^{56},0)$ & -- \\ \hline
$\mathfrak{g}_{6,36}^{0,0}$  &$(e^{23},0,e^{26},-e^{56},e^{46},0)$& -- \\ \hline
$\mathfrak{g}_{6,38}^0$  &$(e^{23},-e^{36},e^{26},e^{26}-e^{56},e^{36}+e^{46},0)$&  \begin{tabular}{@{}c@{}} 
 $\rho=-e^{124}-e^{135}+e^{236}-e^{456} $  \\ $ \Omega=-2e^{16}+e^{23}-e^{25}+e^{34}$ \\
\end{tabular}   \\ \hline
$\mathfrak{g}_{6,54}^{0,-1}$  &$(e^{16}+e^{35},-e^{26}+e^{45},e^{36},-e^{46},0,0)$& \begin{tabular}{@{}c@{}} 
 $\rho=e^{125}-e^{136}+e^{246}+e^{345} $  \\ $ \Omega=e^{14}+e^{23}+e^{34}+\frac{4}{3}e^{56}$ \\
\end{tabular}  \\ \hline
$\mathfrak{g}_{6,70}^{0,0}$  &$(-e^{26}+e^{35},e^{16}+e^{45},-e^{46},e^{36},0,0)$& -- \\ \hline
$\mathfrak{g}_{6,78}$  &$(-e^{16}+e^{25},e^{45},e^{24}+e^{36}+e^{46},e^{46},-e^{56},0)$& -- \\ \hline
$\mathfrak{g}_{6,118}^{0,-1,-1}$  &$(-e^{16}+e^{25},-e^{15}-e^{26},e^{36}-e^{45},e^{35}+e^{46},0,0)$& \begin{tabular}{@{}c@{}} 
 $\rho=e^{126}+e^{135}+e^{145}-e^{245}+e^{346} $  \\ $ \Omega=e^{14}+e^{23}+e^{56}$ \\
\end{tabular}  \\ \hline
$\mathfrak{n}_{6,84}^{\pm 1}$  &$(-e^{45},-e^{15}-e^{36},-e^{14}+e^{26} \mp e^{56},e^{56},-e^{46},0)$& -- \\ \hline
$\mathfrak{e}(2) \oplus \mathfrak{e}(2)$ &$(0,-e^{13},e^{12},0,-e^{46},e^{45})$ & -- \\ \hline 
$\mathfrak{e}(1,1) \oplus \mathfrak{e}(1,1)$ &$(0,-e^{13},-e^{12},0,-e^{46},-e^{45})$ & \begin{tabular}{@{}c@{}} 
 $\rho=-e^{125}-e^{126}+e^{135}-e^{145}-e^{246}+e^{345}+e^{346} $  \\ $ \Omega=-e^{14}+e^{23}-2e^{56}$ \\
\end{tabular} \\ \hline 
$\mathfrak{e}(2) \oplus \mathbb{R}^3$ &$(0,-e^{13},e^{12},0,0,0)$ & -- \\ \hline 
$\mathfrak{e}(1,1) \oplus \mathbb{R}^3$ &$(0,-e^{13},-e^{12},0,0,0)$ & -- \\ \hline 
$\mathfrak{e}(2) \oplus \mathfrak{e}(1,1)$ &$(0,-e^{13},e^{12},0,-e^{46},-e^{45})$ & -- \\ \hline 
$\mathfrak{e}(2) \oplus \mathfrak{h}$ &$(0,-e^{13},e^{12},0,0,e^{45})$ & -- \\ \hline 
$\mathfrak{e}(1,1) \oplus \mathfrak{h}$ &$(0,-e^{13},-e^{12},0,0,e^{45})$ & -- \\ \hline 
$A_{5,7}^{-1,\beta,-\beta}\oplus \mathbb{R}$ & $(e^{15},-e^{25},\beta e^{35},-\beta e^{45},0,0), \quad -1\leq \beta<0$ & \begin{tabular}{@{}c@{}} 
 $\rho=-e^{126}-e^{145}-e^{235}-e^{346} $  \\ $ \Omega=-e^{13}+e^{15}+e^{24}+e^{56}$ \\ 
 ($\beta=-1$) \\
\end{tabular}\\ \hline
$A_{5,8}^{-1} \oplus \mathbb{R}$  & $(e^{25},0,e^{35},-e^{45},0,0)$ & -- \\ \hline
$A_{5,13}^{-1,0,\gamma}\oplus \mathbb{R}$ & $(e^{15},-e^{25},\gamma e^{45},-\gamma e^{35},0,0), \quad \gamma>0$& -- \\ \hline
$A_{5,14}^0 \oplus \mathbb{R}$ & $(e^{25},0,e^{45},-e^{35},0,0)$& -- \\ \hline
$A_{5,15}^{-1} \oplus \mathbb{R}$  &$(e^{15}+e^{25},e^{25},-e^{35}+e^{45},-e^{45},0,0)$& -- \\ \hline
$A_{5,17}^{0,0,\gamma} \oplus \mathbb{R}$  &$(e^{25},-e^{15},\gamma e^{45},-\gamma e^{35},0,0), \quad -1< \gamma<0$& -- \\ \hline
$A_{5,17}^{0,0,-1} \oplus \mathbb{R}$  &$(e^{25},-e^{15},-e^{45}, e^{35},0,0)$&   \begin{tabular}{@{}c@{}} 
 $\rho=e^{135}-e^{146}+e^{236}+e^{245}+e^{346}-e^{356} $  \\ $ \Omega=e^{12}-e^{14}+e^{23}-e^{56}$ \\ 
\end{tabular} \\ \hline
$A_{5,17}^{\alpha,-\alpha,1} \oplus \mathbb{R}$  & $(\alpha e^{15} + e^{25},-e^{15} + \alpha e^{25},-\alpha e^{35} + e^{45},- e^{35}-\alpha e^{45},0,0), \quad \alpha>0$&  \begin{tabular}{@{}c@{}} 
 $\rho=e^{125}+e^{136}+e^{145}+e^{246}-e^{345} $  \\ $ \Omega=-e^{14}+e^{23}-e^{56}$ \\ 
\end{tabular} \\ \hline
$A_{5,18}^{0} \oplus \mathbb{R}$  &$(e^{25}+e^{35},-e^{15}+e^{45},e^{45},-e^{35},0,0)$& -- \\ \hline
$A_{5,19}^{-1,2} \oplus \mathbb{R}$  &$(-e^{15}+e^{23},e^{25},-2e^{35},2e^{45},0,0)$& -- \\ \hline
\end{tabular} 
}
\end{center}
\end{table}
\renewcommand\arraystretch{1}

\end{document}